\documentclass[11pt]{amsart}

\usepackage{mathtools}

\usepackage{microtype} 
\usepackage[hidelinks, pagebackref]{hyperref}
\usepackage[dvipsnames]{xcolor}
\usepackage{marginnote}
\usepackage{tikz}

\makeatletter
\define@key{href}{font}{#1}
\makeatother
\usepackage{xpatch}
\newcommand\hrefdefaultfont{\ttfamily}
\xpatchcmd\href{\setkeys{href}{#1}}{\setkeys{href}{font=\hrefdefaultfont,#1}}{}{\fail}

\renewcommand*{\backref}[1]{}
\renewcommand*{\backrefalt}[4]{
  \ifcase #1 
  [No citations.]
  \or [#2]
  \else [#2]
  \fi }

\usepackage{cite}
\makeatletter
\newcommand{\citecomment}[2][]{\citenum{#2}#1\citevar}
\newcommand{\citeone}[1]{\citecomment{#1}}
\newcommand{\citetwo}[2][]{\citecomment[,~#1]{#2}}
\newcommand{\citevar}{\@ifnextchar\bgroup{;~\citeone}{\@ifnextchar[{;~\citetwo}{]}}}
\newcommand{\citefirst}{\@ifnextchar\bgroup{\citeone}{\@ifnextchar[{\citetwo}{]}}}

\makeatother

\newcommand{\refthm}[1]{Theorem~\ref{Thm:#1}}
\newcommand{\reflem}[1]{Lemma~\ref{Lem:#1}}
\newcommand{\refprop}[1]{Proposition~\ref{Prop:#1}}
\newcommand{\refcor}[1]{Corollary~\ref{Cor:#1}}

\newcommand{\refdef}[1]{Definition~\ref{Def:#1}}
\newcommand{\refsec}[1]{Section~\ref{Sec:#1}}

\newcommand{\from}{\colon} 

\newcommand{\CC}{{\mathbb{C}}}
\newcommand{\HH}{{\mathbb{H}}}
\newcommand{\RR}{{\mathbb{R}}}

\newcommand{\QQ}{{\mathbb{Q}}}
\newcommand{\PP}{{\mathbb{P}}}

\newcommand{\ZZ}{{\mathbb{Z}}}

\newcommand{\calA}{{\mathcal{A}}}

\newcommand{\Dev}{{\mathcal{D}}}
\newcommand{\calL}{{\mathcal{L}}}
\newcommand{\calS}{{\mathcal{S}}}
\newcommand{\calT}{{\mathcal{T}}}
\newcommand{\calV}{{\mathcal{V}}}

\newcommand{\isom}{\cong}
\newcommand{\Isom}{\operatorname{Isom}}
\newcommand{\PSL}{\operatorname{PSL}}

\newcommand{\bdy}{\partial}
\newcommand{\image}{\mathrm{Im}}
\newcommand{\VolT}{{\textsc{Vol3}}}
\newcommand{\mdoubleoseven}{{\texttt{m007}}}
\newcommand{\mooneo}{{\texttt{m010}}}
\newcommand{\thsup}{\mathrm{th}}

\renewcommand{\ss}{\mathbf{s}}

\newcommand{\rr}{\mathbf{r}}
\newcommand{\zz}{\mathbf{z}}
\newcommand{\cover}{\widetilde}
\newcommand{\Vol}{\operatorname{Vol}}

\makeatletter
\let\c@equation\c@subsection
\makeatother
\numberwithin{equation}{section} 

\makeatletter
\let\c@figure\c@equation
\makeatother
\numberwithin{figure}{section} 

\makeatletter
\let\c@table\c@equation
\makeatother
\numberwithin{table}{section}

\numberwithin{equation}{section}
\theoremstyle{plain}
\newtheorem{theorem}[equation]{Theorem}
\newtheorem{lemma}[equation]{Lemma}
\newtheorem{proposition}[equation]{Proposition}
\newtheorem{corollary}[equation]{Corollary}

\newtheorem{conjecture}[equation]{Conjecture}

\theoremstyle{definition}
\newtheorem{definition/}[equation]{Definition}

\newenvironment{definition}
  {%
   \pushQED{\qed}\begin{definition/}}
  {\popQED\end{definition/}}

\long\def\@savemarbox#1#2{\global\setbox#1\vtop{\hsize\marginparwidth 
  \@parboxrestore\tiny\raggedright #2}}
\begin{document}

\title{Hyperbolic manifolds without positive spun triangulations}

\author[D.~Futer]{David Futer}
\address[]{Department of Mathematics, Temple University,
Philadelphia, PA 19122, USA}
\email[]{dfuter@temple.edu}

\author[J.~Purcell]{Jessica S.~Purcell}
\address[]{School of Mathematics, Monash University, Clayton, VIC 3800, Australia }
\email[]{jessica.purcell@monash.edu}

\author[S.~Schleimer]{Saul Schleimer}
\address[]{Department of Mathematics, 
University of Warwick, Coventry CV4 7AL, UK}
\email[]{s.schleimer@warwick.ac.uk}

\begin{abstract}
Using a result of Choi, we provide the first examples of pairs $(N, \gamma)$
where $N$ is a closed hyperbolic three-manifold,
where $\gamma$ is a simple closed geodesic in $N$, and
where $N - \gamma$ has no positive spun ideal triangulation.
In our first two examples, $N = \VolT$ and the geodesics are its systole and second systole. 
This provides evidence for the conjecture that \VolT\ has no positive spun ideal triangulation for any choice of $\gamma$. 
\end{abstract}

\thanks{This work is in the public domain.}





\maketitle

\section{Introduction}

Suppose that $N$ is a closed connected oriented hyperbolic three-manifold.
Suppose that $\gamma$ is a simple closed geodesic in $N$.
We say that the pair $(N, \gamma)$ admits a \emph{positive spun triangulation} 
if there is an ideal triangulation $\calT = (t_i)$ of $N - \gamma$ and hyperbolic shapes $(z_i)$ for the tetrahedra $t_i$ so that
\begin{itemize}
\item 
$\image(z_i) > 0$ for all $i$ and 
\item 
the metric completion of $\calT$, equipped with the shapes $(z_i)$, is isometric to $N$. 
\end{itemize}

Here is our main result:

\begin{theorem}
\label{Thm:Holonomies}
Let $N$ be $\VolT = \textup{\mdoubleoseven}(3, 1)$, the third manifold in the SnapPy census of closed oriented hyperbolic three-manifolds. 
Let $\gamma$ be a shortest or a second-shortest geodesic in $N$. 
Then $(N, \gamma)$ does not admit a positive spun triangulation.
\end{theorem}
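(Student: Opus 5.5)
The plan is to use Choi's result, which constrains the complex length (holonomy) of $\gamma$ whenever the pair $(N,\gamma)$ admits a positive spun triangulation.

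Suppose $\calT$ is an ideal triangulation of $N - \gamma$ with positively oriented shapes $(z_i)$ whose completion is $N$. Near $\gamma$ the tetrahedra spiral in towards $\gamma$. The developing map of a cusp cross-section, a torus $T$, sends the boundary triangulation onto $\CC^*$ after taking logarithms. The holonomy of the meridian of $\gamma$ is then trivial, with rotation $2\pi$. The holonomy of a longitude is multiplication by $e^{\ell + i\theta}$, where $\ell + i\theta$ is the complex length of $\gamma$.

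Choi's theorem, as I recall it, runs as follows. Positivity of all the shapes makes the induced triangulation of $T$ consist of positively oriented Euclidean triangles. Since the developing image must be a branched-free immersion around each vertex, there are combinatorial restrictions. Concretely, the longitude's complex length must satisfy an inequality relating $\theta$ and $\ell$. One example of such a condition: the rotational part $\theta$, normalised to $(-\pi,\pi]$, cannot be too small relative to $\ell$, or there is a bound coming from angle sums along the spun edges.

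The proof then has three steps.

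\textbf{Step 1.} State Choi's criterion precisely, as a condition $C(\ell,\theta)$ that every geodesic in a positively spun pair must satisfy. Because a meridian can be chosen in many ways, the criterion should be phrased so that it is independent of the choice of longitude, i.e.\ invariant under $\theta \mapsto \theta + 2\pi k$.

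\textbf{Step 2.} Compute rigorously the complex lengths of the shortest and second-shortest geodesics of $\VolT = \mdoubleoseven(3,1)$. This can be done in SnapPy with verified (interval) arithmetic from the length spectrum, together with a certified argument that the length spectrum list is complete up to the second value. This yields enclosures for $\ell_1 + i\theta_1$ and $\ell_2 + i\theta_2$. If there are several geodesics of the same length, up to symmetry of $\VolT$, I will handle each isometry class.

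\textbf{Step 3.} Check that each computed complex length violates $C(\ell,\theta)$ for every lift $\theta + 2\pi k$. For the finitely many relevant $k$ this is an interval-arithmetic check; the infinitely many others are excluded by a monotonicity or growth argument in $|k|$.

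I expect Step 1 to be the main obstacle: extracting from Choi's result a usable quantitative obstruction that really applies to both the systole and the second systole of $\VolT$. I would first try the simplest form, namely that the longitude rotation must be compatible with a cyclic ordering of ideal vertices around the spinning. If that fails, I would combine it with the rigidity coming from the small number of tetrahedra possible, bounded via volume. Verifying the spectrum rigorously is routine but requires care.
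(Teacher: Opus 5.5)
There is a genuine gap, and it is your Step~1. The result of Choi that the argument rests on is not a condition $C(\ell,\theta)$ on the complex length of $\gamma$, and no such condition can be read off from it.

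Choi's theorem (\refthm{ChoiAlt}) says that at a \emph{positive} point $\zz\in\calS(\calT)$ the shape variety is smooth. It also says that the differentials $dH(r_i)$ of the log-holonomies, one slope per cusp, form a basis of the cotangent space. In particular $dH(r)\neq 0$ at $\zz$ for every slope $r$. This is a first-order statement about how holonomies vary along the deformation variety. By contrast, $\ell+i\theta$ is the \emph{value} of $H$ on a curve dual to the filling slope, so it is a different kind of quantity.

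Your local heuristic at the cusp also yields nothing. For a positive spun structure, lifting the developing map of the cusp torus through $\exp\from\CC\to\CC^*$ gives an equivariant local homeomorphism. This simply identifies the cusp torus with the Euclidean torus $\CC/\langle 2\pi i,\ \ell+i\theta\rangle$, and the local picture places no constraint on $(\ell,\theta)$. So the interval computations of Steps~2 and~3 have nothing to be checked against, and the promised growth argument in $|k|$ has no content.

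Here is what is actually missing.
\begin{enumerate}
\item You need one explicit spun triangulation of $(N,\gamma)$, necessarily non-positive, at which some $dH(r)$ vanishes on the tangent space of the shape variety. For \mdoubleoseven$(3,1)$ this is the census triangulation at the filling points, where $y^2+y+1=0$, $xz=y$ and $x+z=2y$, with $r=3\mu+5\lambda$. For \mooneo$(-1,2)$ it is $r=2\mu-\lambda$.
\item The theorem quantifies over \emph{all} triangulations of $N-\gamma$, but the obstruction is attached to a particular shape variety rather than being an intrinsic invariant of $(N,\gamma)$. So you must transport the vanishing to every other spun triangulation of the same signed filling. This is \refthm{KSS}: Kalelkar--Schleimer--Segerman connect the two $L$--essential triangulations by 2--3 and 0--2 moves, giving a birational map $\calS(\calT)\to\calS(\calT')$ that carries $\zz$ to $\zz'$. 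Hence $dH(r)=0$ at $\zz'$, and by \refthm{ChoiAlt} the point $\zz'$ cannot be positive (\refcor{ChoiAlt}).
\item Identifying $\gamma$ with these filling cores cannot be done by verified computation in SnapPy on the filling itself. Such verification requires a positive triangulation, which is exactly what fails here. The paper instead certifies the length spectrum in a three-fold cover.
\end{enumerate}
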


Overall, starting from the one-cusped SnapPy census~\cite{SnapPy}, 
we found 271 examples of pairs $(N, \gamma)$, including the two above,
where $N$ is a closed hyperbolic three-manifold,
where $\gamma$ is a simple closed geodesic in $N$, and
where numerical evidence strongly suggests that $N$ has no positive spun triangulation along $\gamma$.
In similar fashion, we found 28 examples of pairs $(Q, \gamma)$
where $Q$ is a closed hyperbolic orbifold,
where $\gamma$ is the singular locus of $Q$, with cone angle $\pi$ in each case, and
where $Q$ has no positive spun triangulation along $\gamma$.
Finally, we found three examples of pairs $(N, \gamma)$ 
as above where, additionally, the polytope of angle structures is non-empty. 
This implies that a particular approach to the generalised Casson conjecture cannot succeed. 
See \refsec{Further} for details.

\subsection{History and context}

A longstanding conjecture in geometric topology posits that every noncompact, finite-volume hyperbolic three-manifold admits a decomposition into positive (that is, geometric) ideal tetrahedra.
Yoshida~\cite{Yoshida:TriangConjecture} gave, perhaps, the first instance of this conjecture in the literature and some evidence for it.
See also Wada, Yamashita, and Yoshida~\cite{wada1996inequality}, Petronio \cite[Conjecture 2.3]{Petronio:IdealTriangulations}, and Petronio and Porti~\cite{PetronioPorti} for early accounts of the conjecture and its history.

Positive ideal triangulations are known to exist for certain families of manifolds: 
\begin{itemize}
\item
the manifolds of SnapPy's cusped census (Hildebrand and Weeks~\cite{HildebrandWeeks}, extended by Callahan, Hildebrand and Weeks~\cite{CHW}, Burton~\cite{Burton} and Li~\cite{Li}),
\item
punctured torus bundles and two-bridge links (Gu\'eritaud and Futer~\cite{GueritaudFuter}),
\item
highly twisted manifolds (Ham and Purcell~\cite{HamPurcell}),
\item gluings of convex ideal polyhedra with at most six faces (Sirotkina~\cite{Sirotkina}) or ideal dodecahedra
 (Goerner~\cite{Goerner:Platonic}).
\end{itemize}
Dadd and Duan~\cite{DaddDuan} produce infinitely many positive ideal triangulations of the figure-eight knot complement.

As additional evidence for the conjecture, Luo, Schleimer, and Tillmann proved that every cusped hyperbolic three-manifold has a finite cover with a positive triangulation~\cite{LuoSchleimerTillmann}. 
Futer, Hamilton, and Hoffman strengthened this to produce a cover admitting infinitely many positive ideal triangulations~\cite{FHH}.

There is also evidence against the conjecture.
Recall that every cusped three-manifold admits a decomposition into convex ideal polyhedra~\cite{EpsteinPenner}.
However, it is not known whether this can be further subdivided into positive tetrahedra (see, for example, the discussion of~\cite{PetronioPorti}). 
Very recently Li~\cite{Li:PersonalCommunication} found cusped hyperbolic three-manifolds none of whose minimal ideal triangulations is positive. 
Note, however, that these examples do admit positive ideal triangulations with one more tetrahedron.

Following Thurston \cite{Thurston79}, we may regard cusped manifolds as limits of closed three-manifolds, orbifolds, and cone-manifolds.
In this sense, examples of the latter not having positive spun triangulations is again evidence against the conjecture. 
Choi~\cite{Choi:PosOriented} found the first such cone manifold.
Our main result (\refthm{Holonomies}) gives the first such examples of closed hyperbolic manifolds (equipped with a choice of geodesic).
This supports the following: 

\begin{conjecture}
\label{Conj:Vol3}
Suppose that $N$ is the hyperbolic manifold \VolT.
Then, for any embedded closed geodesic $\gamma$, the pair
 $(N, \gamma)$ does not admit a positive spun triangulation.
\end{conjecture}

This conjecture was first hinted at by Hodgson and Weeks in their construction of the SnapPy closed census in the early 1990s~\cite{Hodgson:PersonalCommunication}.
It has been asked by Trnkova~\cite{Trnkova:PersonalCommunication};
for a related conjecture see~\cite[Problem~5.15]{DelpHoffossManning}.
In addition to \refthm{Holonomies}, this conjecture is supported by extensive computer searches, none of which succeeded in finding a positive spun triangulation. Note that \VolT\ is the only hyperbolic manifold encountered during computational searches that has this property; see \refsec{OtherManifolds} for more details. 

The manifold \VolT\ has appeared in other settings.
It was discovered by Hodgson and Weeks, who proved it was hyperbolic by producing a Dirichlet domain (unpublished).
It was investigated in some detail by Jones and Reid~\cite{JonesReid}; 
they prove it is hyperbolic using arithmetic techniques.
It was conjectured to be the third smallest volume hyperbolic three-manifold by Gabai, Meyerhoff, and Thurston~\cite{GMT}, who describe it topologically as the $(3, 1)$ Dehn filling of \mdoubleoseven. 
This conjecture about volumes was recently proven by Gabai, Haraway, Meyerhoff, Thurston, and Yarmola~\cite{GHMTY}. 
It was also certified (again) to be hyperbolic, using Mostow rigidity and a three-fold cover, in HIKMOT~\cite{HIKMOT}.
It is the only closed hyperbolic three-manifold that has no closed geodesic contained in an embedded tube of radius $\log(3)/2$, by work begun in~\cite{GMT, JonesReid} and completed by Gabai and Trnkova~\cite{GabaiTrnkova}.

\subsection{Acknowledgements}

This work has benefited from many enlightening conversations, including with the following individuals: Young Choi, Nathan Dunfield, Matthias Goerner, Craig Hodgson, Neil Hoffman, Steve Kerckhoff, Henry Segerman, Stephan Tillmann, and Maria Trnkova. 
We also thank Diane Maclagan for explaining the various features of the software package Macaulay2 \cite{Macaulay2}.

This paper is based upon work supported by the National Science Foundation of the United States. 
Grant No.\ DMS--2405046 supported Futer. 
Grant No.\ DMS--2424139 supported Purcell and Schleimer while they were in residence at the Simons Laufer Mathematical Sciences Institute in Berkeley, California in early 2026.
Purcell was additionally supported by the Australian Research Council, grant DP240102350.

\section{Background and tools}

In this section we discuss some of the tools that are needed to prove \refthm{Holonomies}.
References for the background material include:
Thurston~\cite[Chapter~4]{Thurston79},
Neumann--Zagier~\cite[Section~2]{NeumannZagier85},
Futer--Gu\'eritaud~\cite[Section~2]{FuterGueritaud11}, and
Purcell~\cite[Chapter~4]{Purcell20}.

\subsection{Ideal triangulations}\label{Sec:IdealTri}

Suppose that $M$ is a compact, connected, oriented, three-manifold with non-empty boundary.
Suppose that all components of $\bdy M$ are tori.

Suppose that $\calT$ is a collection of oriented model tetrahedra, together with (orientation reversing) face pairings.
We say that $\calT$ is an \emph{ideal triangulation} of $M$ if the realisation space of $\calT$, minus its vertices, is homeomorphic (in orientation-preserving fashion) to the interior of $M$.
Every model tetrahedron has six model edges.
Collections of model edges are glued in the realisation space to give edges in the quotient. 

\subsection{Shape varieties}
\label{Sec:Shape}

Suppose that $\calT$ is an ideal triangulation of $M$, with $M$ as above.
For every model tetrahedron $t_i \in \calT$ we take $z_i$ to be a variable over $\CC - \{0, 1\}$.
We define the \emph{edge parameters} for $t_i$ as follows:
\[
z_i = z_i
\qquad
z'_i = \frac{1}{1 - z_i} 
\qquad
z''_i = \frac{z_i - 1}{z_i}
\]
Non-adjacent edges of $t_i$ share the same parameter.
Suppose that $v$ is any vertex of $t_i$ and that $T(v)$ is a small triangle in $t_i$ cutting off $v$.
We call $T(v)$ a \emph{cusp triangle}.
We label the corners of $T(v)$ with the edge parameter of the associated edge.
We arrange the labels so that there is a vertex $v$ of $t_i$ where, viewing $T(v)$ from $v$, we have (in anticlockwise order) the edge parameters $z_i$, $z'_i$, and $z''_i$.
See~\cite[Figure~1, page~311]{NeumannZagier85}.
Note that there are two conventions on the ordering of the edge parameters.
Here we follow the convention used by SnapPy;
see~\cite[Figure~15]{Weeks05}.

For every edge $e$ in $\calT$, we take the product of the edge parameters of the model edges identified to $e$.
Setting this product equal to $1$ gives the \emph{edge equation} for $e$.
Note that there are $|\calT|$ edges in the quotient;
thus there are $|\calT|$ edge equations. 
For more detailed expositions, see
Thurston~\cite[Chapter~4.2]{Thurston79},
Neumann--Zagier~\cite[page~312, Equation~16]{NeumannZagier85},
Futer--Gu\'eritaud~\cite[Definition~2.4]{FuterGueritaud11}, and
Purcell~\cite[Section~4.2]{Purcell20}.

We define the \emph{shape variety} $\calS(\calT)$ to be the set of solutions
\[
\zz = (z_i) \in (\CC - \{0, 1\})^{|\calT|}
\]
to the edge equations.
Note that this is a (quasi-projective) variety. 

Suppose that $\zz \in \calS(\calT)$ is a collection of shapes.
We place the vertices of some tetrahedron $t_0$ at $\infty$, $0$, $1$, and $z_0$.
This specifies a \emph{developing map} $\Dev \from \cover{M} \to \HH^3$
(see \cite[Chapter~3]{Purcell20}).
The developing map in turn determines the \emph{holonomy representation}:
\[
\rho_\zz \from \pi_1(M) \to \Isom^+(\HH^3) \isom \PSL(2, \CC)
\]
The conjugacy class of $[\rho_\zz]$ is a point of the \emph{character variety} $X(M)$.
The induced map from $\calS(\calT)$ to $X(M)$, taking $\zz$ to $[\rho_\zz]$, is algebraic;
see~\cite[Lemma~2.3]{Tillmann12} for further details.

A point $\zz \in \calS(\calT)$ is called \emph{positive} if every $z_i$ has positive imaginary part. Geometrically, this means that all tetrahedra are positively oriented, and that these positively oriented tetrahedra can be glued together to yield a (possibly incomplete) hyperbolic metric on $M$. 

\subsection{Slopes and spun triangulations}

Suppose that $M$ is a cusped three-manifold and $\calT$ an ideal triangulation, as above.
The cusp triangles for the vertices of the tetrahedra of $\calT$ fit together to triangulate the \emph{cusp tori} of $M$. 
(These are naturally copies of the components of $\bdy M$.)
Suppose that $T_j$ is a cusp torus.
An isotopy class of an essential simple curve in a $T_j$ is called a \emph{slope} in $T_j$.
An isotopy class of an oriented slope is called a \emph{signed slope}.  
Note that signed slopes are in bijection with primitive homology classes in $H_1(T_j)$.
For each cusp torus $T_j$, we pick a pair of signed slopes $\mu_j$ and $\lambda_j$, called the \emph{meridian} and \emph{longitude}, that form a basis for $H_1(T_j)$.

Suppose that $J$ is a subset of the set of cusp tori of $M$.
Suppose that $\ss = (s_j)_{j \in J}$ is a tuple of slopes, indexed by $J$, where $s_j$ is a slope in $T_j$.
We use $M(\ss)$ to denote the manifold obtained from $M$ by filling each $T_j$ (for $j \in J$) along the corresponding slope $s_j$.
If $J$ is empty, then $M(\ss) = M$;
if $J$ is the set of all slopes, then $M(\ss)$ is closed.

Suppose that the interior of $M$ admits a complete finite-volume hyperbolic structure.
Let 
\[
\rho^M \from \pi_1(M) \to \Isom^+(\HH^3)
\]
be the resulting discrete and faithful representation. 

Fix now a collection of slopes $\ss$.
Suppose that the filling $M(\ss)$ admits a hyperbolic structure.
Let $\psi_\ss \from \pi_1(M) \to \pi_1(M(\ss))$ be the surjective homomorphism induced by inclusion $M \hookrightarrow M(\ss)$.
The discrete and faithful representation associated to $\pi_1(M(\ss))$ is denoted $\rho^{M(\ss)} \from \pi_1(M(\ss)) \to \Isom^+(\HH^3)$.

\begin{definition}
\label{Def:Unsigned}
Suppose that $\ss$ is a collection of slopes for $M$. 
Suppose that $\calT$ is an ideal triangulation of $M$. 
Suppose that $\zz \in \calS(\calT)$ is a collection of shapes.
We say that $(\calT, \zz)$ is an \emph{unsigned spun triangulation} of $M(\ss)$ if
\[
[\rho^{M(\ss)} \circ \psi_\ss] = [\rho_\zz]
\]
That is, the two representations agree, up to a global conjugation.
\end{definition}


When the collection of slopes $\ss$ is signed, we require a stronger property. 

\begin{definition}
Suppose that $\ss$ is a collection of signed slopes for $M$. 
Let $\gamma_j$ be the core curve of the $j^\thsup$ solid torus in $M(\ss)$.
Note that the orientation on $s_j$ gives an orientation on $\gamma_j$, using the right hand-rule.

Suppose that $(\calT, \zz)$ is an unsigned spun triangulation of $M(\ss)$.
Recalling that $M(\ss)$ is hyperbolic, we identify its universal cover with $\HH^3$, and ensure that the images of the developed tetrahedra of $\cover{\calT}$ land in this copy of $\HH^3$.
Suppose that $v$ is an ideal vertex of a developed tetrahedron, say belonging to an preimage of the cusp $T_j$.
Thus $\bdy \Dev(v)$ lands at one of the two ends of some preimage of $\gamma_j$.
We say that $(\calT, \zz)$ is a \emph{(signed) spun triangulation} of $M(\ss)$ if
every image of every ideal vertex lands at the positive end of the corresponding geodesic.
\end{definition}

\subsection{Holonomies}
\label{Sec:Holonomies}
We now recall the \emph{holonomy} and the \emph{log holonomy} of a signed slope $s$ in a fixed cusp $T = T_j$.
For more details, see \cite[Definition~2.4]{FuterGueritaud11} or \cite[Section~4.3]{Purcell20}.

We continue to assume that $\zz \in \calS(\calT)$ is a collection of shapes that solves the edge equations.
We obtain a (two-dimensional) triangulation of $T$ by gluing together the cusp triangles in $\calT$ meeting $T$.
We isotope $s$ in $T$ to ensure that the arcs of $s$ enter and exit the cusp triangles through distinct edges, cutting off corners of triangles. 
Let $z_1, \dots, z_k$ be the shape parameters at these corners. 
We define the \emph{log holonomy} of $s$ to be
\[ 
H(s) = \sum_{i=1}^k \epsilon_i \log(z_i) 
\]
where $\epsilon_i = 1$ for corners of triangles to the left of $s$, and $\epsilon_i = -1$ for corners of triangles to the right.
We need not commit ourselves to a particular branch of the logarithm,  
because we will  always work with the derivative of the log holonomy.
(If we want to pin down the correct branch, we begin with the branch that, at the complete hyperbolic structure, makes all log holonomies of slopes vanish.)

The \emph{holonomy of $s$} is defined to be $h(s) =  \exp(H(s))$. 
Note that $h(s)$ is a rational function of the shapes $z_j$. 
Since $\zz$ lies in the shape variety, the holonomy $h(s)$ depends only on the isotopy class of $s$ and not on the choice of simplicial curve representing $s$.
In fact, $h \from H_1(T) \to \CC^*$ is a homomorphism.

If $(\calT, \zz)$ is an unsigned spun triangulation of $M(\ss)$, we have $\psi_\ss(s_j) = 1 \in \pi_1(M(\ss))$, which implies $\rho_\zz(s_j)$ is the identity, hence $h(s_j) = 1$. 
The \emph{holonomy equation} $h(s_j) = 1$ does not differentiate between $s_j$ and $-s_j$.
Holonomy equations will be used to cut out particular subvarieties of the shape variety $\calS(\calT)$.

If $\zz \in \calS(\calT)$ is a positive point (meaning every $z_i$ has positive imaginary part), and furthermore $(\calT, \zz)$ is a signed spun triangulation of $M(\ss)$, the positively oriented tetrahedra of $\calT$ fit together to give an incomplete hyperbolic metric on $M$ whose completion is $M(\ss)$. 
This recovers the definition of \emph{positive spun triangulation} in the opening paragraph of the paper. 
In this case, the log holonomy of $s_j$ is well-defined 
and satisfies $H(s_j)= 2\pi \sqrt{-1}$. 

\subsection{Positive triangulations and smoothness}

Our main tool in proving \refthm{Holonomies} is the following result of Choi~\cite[Corollary~4.14]{Choi:PosOriented}.

\begin{theorem}
\label{Thm:ChoiAlt}
Suppose that $M$ is a $k$--cusped hyperbolic three-manifold.
Suppose that $\calT$ is an ideal triangulation of $M$. 
Suppose that $\zz \in \calS(\calT)$ is a positive point (that is, the $z_j$ have positive imaginary parts).
Then the shape variety $\calS(\calT)$ is smooth at $\zz$.
Furthermore, for any tuple of slopes $\rr = (r_1, \ldots, r_k)$, with one slope $r_i$ per cusp of $M$,
the derivatives $d H(r_i)$ form a basis for the cotangent space of $\calS(\calT)$ at $\zz$. 
\end{theorem}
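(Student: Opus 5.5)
The plan is to reduce both assertions to one linear-algebra statement at $\zz$, in logarithmic coordinates. Let $n = |\calT|$ and let $w_i = d\log z_i$ on $(\CC-\{0,1\})^n$. Then $d\log z'_i = w_i\, z_i/(1-z_i)$ and $d\log z''_i = w_i/(z_i(z_i-1))\cdot z_i = -w_i/(1-z_i)$, so that the three differentials sum to zero. Each edge equation and each log holonomy $H(s)$ has a differential that is a linear form in $w = (w_i) \in \CC^n$. I will prove:

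\emph{Claim.} If $\image(z_i)>0$ for all $i$, and $w \in \CC^n$ annihilates the differentials of all $n$ edge equations and the forms $dH(r_1), \ldots, dH(r_k)$, then $w=0$.

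Granting the claim, the $n+k$ forms span $(\CC^n)^*$. Neumann--Zagier show that the edge differentials span a space of dimension at most $n-k$, so together with the $k$ forms $dH(r_j)$ the total rank is at most $n$. Hence the edge differentials have rank exactly $n-k$, and the $dH(r_j)$ are independent modulo them. By the Jacobian criterion, $\calS(\calT)$ is smooth of dimension $k$ at $\zz$, and the restrictions of the $dH(r_j)$ form a basis of its cotangent space there. This is precisely \refthm{ChoiAlt}.

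To prove the claim I will use a positivity argument in the spirit of the Hessian-of-volume computation. The local input is a single-tetrahedron computation. For one tetrahedron, with $w' = d\log z'$,
\[
\image\bigl(w\,\overline{w'}\bigr) = |w|^2\,\image\!\Bigl(\frac{\bar z}{1-\bar z}\Bigr) = -\,|w|^2\,\frac{\image z}{|1-z|^2}.
\]
This is strictly negative unless $w=0$, whenever $\image z>0$. So
\[
Q(w) = \sum_i \image\bigl(d\log z_i\,\overline{d\log z'_i}\bigr)
\]
is negative definite on $\CC^n$ at a positive point.

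The global input is the Neumann--Zagier symplectic identity. Let $\omega$ be the bilinear form on $\CC^n$ pairing the $z$- and $z'$-coordinates tetrahedron by tetrahedron. Using the combinatorics of how edges and cusp curves cross the tetrahedra, $\omega$ can be rewritten as a sum of two kinds of terms:
\begin{itemize}
\item terms pairing each edge differential against another linear form, and
\item for each cusp $j$, a multiple of $dH(r_j)\wedge dH(\ell_j)$, where $\ell_j$ is any slope dual to $r_j$.
\end{itemize}
Neumann--Zagier prove this for a meridian--longitude pair, and the version for a general $r_j$ follows by a change of basis in $H_1(T_j)$. Applying the identity with conjugation in the second slot and taking imaginary parts expresses $Q(w)$ through these edge and cusp terms.

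Every term on the right then contains a factor that vanishes by hypothesis, namely an edge differential or $dH(r_j)$. Hence $Q(w)=0$, and negative definiteness forces $w=0$.

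The main obstacle is this global identity. One must check that the Neumann--Zagier relation holds in the Hermitian form needed here (one argument conjugated), and that the sign and branch conventions for $z, z', z''$ used in this paper, namely SnapPy's, match. A related concern is whether $\omega$ is really the right form: the natural form might instead pair $z'$ with $z''$, or be a symmetrised combination of all three pairs. If the na\"ive $Q$ does not match, I would first try the cyclically symmetrised $\sum_i \image\bigl(d\log z_i\,\overline{d\log z'_i} + d\log z'_i\,\overline{d\log z''_i} + d\log z''_i\,\overline{d\log z_i}\bigr)$. Each summand has the same negative sign, so the symmetrised form is still negative definite, and it is the form that corresponds to the Neumann--Zagier symplectic form.
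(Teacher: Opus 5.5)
The paper gives no proof of this statement; it is quoted from Choi~\cite[Corollary~4.14]{Choi:PosOriented}. Your proposal is instead a self-contained proof, and its core is correct.

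Put $\zeta_i = z_i/(1-z_i)$ and $v = (w, \zeta w) \in \CC^{2n}$ in the coordinates $(\log z, \log z')$. Your hypotheses say that $v$ is killed by the integer coefficient vectors of the edges and of the $r_j$. By Neumann--Zagier these vectors span a Lagrangian subspace for their form $\omega$; this uses the exact rank $n-k$ of the edge rows, not just the upper bound. Since these vectors are real, $\bar v$ is killed as well. Your decomposition of $\omega$ then gives $\omega(v,\bar v) = 2i\,Q(w) = 0$, and $Q(w) = -\sum_i |w_i|^2 \image z_i/|1-z_i|^2$ forces $w=0$.

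Two points should be said explicitly:
\begin{itemize}
\item The vanishing in the conjugated slot uses the reality of those coefficient vectors.
\item The decomposition must hold on all of $\CC^{2n}$. On $\CC^n$ the holomorphic form $\sum_i d\log z_i \wedge d\log z'_i$ is identically zero, and $\bar v$ does not lie on the graph $w' = \zeta w$.
\end{itemize}

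Your hedges at the end are unnecessary. Because $w + w' + w'' = 0$, one has $\image(w\bar w') = \image(w'\bar w'') = \image(w''\bar w)$, so the symmetrised form is just $3Q$. Eliminating a different edge parameter, or using another ordering convention, changes $\omega$ only by a global sign, which is harmless since you use only isotropy and definiteness. Compared with the citation, your route needs nothing beyond Neumann--Zagier combinatorics. It also works verbatim at any point where all $\image z_i$ have the same sign.

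The step that does not go through as written is the appeal to the Jacobian criterion. That the $n$ edge differentials have rank $n-k$ at $\zz$ only shows the Zariski tangent space is $k$--dimensional. It does not show that $\calS(\calT)$ has dimension $k$ near $\zz$. For example, $y = x^2$ and $y = x^3$ in $\CC^2$ meet in an isolated point at the origin, where the Jacobian has rank one. Without a lower bound on local dimension, neither smoothness nor the basis statement follows.

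The fix uses the same relations that gave your rank bound.
\begin{enumerate}
\item An integer relation $\sum_e c_e\,\mathrm{row}_e = \sum_i m_i (1,1,1)_i$ among the unreduced edge rows gives the identity $\prod_e g_e^{c_e} = \prod_i (z_i z'_i z''_i)^{m_i} = \pm 1$ on $(\CC - \{0,1\})^n$. Here $g_e$ is the product of edge parameters around $e$, and the sign is $+1$ since $\zz$ is a solution.
\item Choose a set $S$ of $n-k$ edges whose differentials are independent at $\zz$. The relations form a space of dimension exactly $k$. No nonzero relation is supported on $S$, since the rows in $S$ are independent. Hence, restricted to the remaining $k$ edges, they give an invertible integer matrix.
\item Take logarithms near $\zz$, where every $g_e$ is close to $1$. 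The equations $g_e = 1$ for $e \in S$ then force $g_e = 1$ for $e \notin S$.
\end{enumerate}
So near $\zz$ the variety $\calS(\calT)$ is cut out by $n-k$ equations with independent differentials. The holomorphic implicit function theorem gives smoothness of dimension $k$, and your conclusion about the $dH(r_j)$ follows.
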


A key consequence of \refthm{ChoiAlt} is the following statement; 
compare \cite[Corollary~4.18]{Choi:PosOriented}.

\begin{corollary}
\label{Cor:ChoiAlt}
Suppose that $M$ is a $k$--cusped hyperbolic three-manifold.
Suppose that $\ss = (s_1, \ldots, s_k)$ is a tuple of slopes on the cusps of $M$, with one slope per cusp.
Suppose that $(\calT, \zz)$ is a spun ideal triangulation for $M(\ss)$, and that $r$ is a slope on some cusp of $M$ for which the derivative $d H(r)$ vanishes at $\zz \in \calS(\calT)$.
Then $M(\ss)$ does not admit a positive spun triangulation.
\end{corollary}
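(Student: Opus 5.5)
The plan is to argue by contradiction, transporting Choi's nondegeneracy from a hypothetical positive triangulation $\calT'$ to the given triangulation $\calT$ by way of the character variety $X(M)$. Suppose $M(\ss)$ admits a positive spun triangulation $(\calT', \zz')$, where $\calT'$ is an ideal triangulation of the same $M$. By \refdef{Unsigned}, both $\zz'$ and $\zz$ map to the same character
\[
\chi = [\rho^{M(\ss)} \circ \psi_\ss] \in X(M).
\]
The idea is that the log holonomy of $r$ is, near $\chi$, a function on $X(M)$ alone. Both triangulations see the same germ of $X(M)$ at $\chi$, so the derivative of $H(r)$ cannot vanish for one and not the other.

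\textbf{Step 1: holonomies as functions on $X(M)$.} Let $j_0$ be the cusp containing $r$. For each cusp $j$ choose a slope $r_j \neq s_j$, taking $r_{j_0} = r$ if $r \neq s_{j_0}$; the degenerate case $r = s_{j_0}$ needs a separate check. Under $\chi$, each $r_j$ maps to a nontrivial power of the core geodesic $\gamma_j$, so $\chi(r_j)$ is loxodromic. Hence, near $\chi$, the eigenvalue of $\rho(r_j)$ attached to the fixed point at the positive end of $\gamma_j$ is a well-defined analytic function on $X(M)$. Take its logarithm on the branch fixed in \refsec{Holonomies}, and call the result $\Phi_j$. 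For either triangulation, the cusp vertices develop to these distinguished fixed points; this is exactly where the signed condition is used. So near $\zz$, respectively $\zz'$, we have $H_{\calT}(r_j) = \Phi_j \circ c$, respectively $H_{\calT'}(r_j) = \Phi_j \circ c'$, where $c$ and $c'$ are the algebraic character maps.

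\textbf{Step 2: use \refthm{ChoiAlt} at $\zz'$.} The space $\calS(\calT')$ is smooth of dimension $k$ at $\zz'$, and the $dH_{\calT'}(r_j) = d(\Phi_j \circ c')$ form a basis of its cotangent space. So $\Phi = (\Phi_j)$ composed with $c'$ is a local biholomorphism onto an open subset of $\CC^k$. It follows that $c'$ is an immersion, and its image is a germ $X_0 \subset X(M)$ at $\chi$. On $X_0$ the map $\Phi$ restricts to a local biholomorphism; in particular $d(\Phi_{j_0}|_{X_0}) \neq 0$ at $\chi$.

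\textbf{Step 3: a local section into $\calS(\calT)$.} This is the main obstacle: the triangulation $\calT$ must deform along all of $X_0$. I would build an analytic map $\sigma$ from a neighbourhood of $\chi$ in $X_0$ to $\calS(\calT)$ with $\sigma(\chi) = \zz$ and $c \circ \sigma = \mathrm{id}$, as follows.
\begin{itemize}
\item Lift representations near $\chi$ locally analytically to $\PSL(2,\CC)$-representations.
\item For each lift of each cusp of $\calT$, take the attracting (positive-end) fixed point of the image of the corresponding peripheral subgroup. This varies analytically because $\chi(r_j)$ is loxodromic.
\item Place the ideal vertices of $\tilde\calT$ equivariantly at these points, and read off the cross-ratios.
\end{itemize}
At $\chi$ this recovers $\zz$. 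Since $\zz$ has no shapes in $\{0,1,\infty\}$, nondegeneracy persists nearby, and the edge equations hold automatically because the shapes come from an actual equivariant placement of vertices.

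\textbf{Step 4: conclude.} On $X_0$ we have $\Phi_{j_0} = H_{\calT}(r) \circ \sigma$. The chain rule gives $d\Phi_{j_0}|_{X_0} = dH_{\calT}(r) \circ d\sigma$ at $\chi$. The left side is nonzero by Step 2, while $dH_{\calT}(r) = 0$ at $\zz$ by hypothesis, which is a contradiction.

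\textbf{The case $r = s_{j_0}$.} Here I would instead use $H(s_{j_0})$ together with a dual slope $r'_{j_0}$ on that cusp. On the component of the deformation space near a Dehn filling, $dH(s_{j_0})$ and $dH(r'_{j_0})$ are related by the local analytic Dehn-surgery coordinates of Thurston and Neumann--Zagier. So the same transfer argument should apply, with $\Phi_{j_0}$ replaced by the holonomy of $s_{j_0}$ pulled back through $\sigma$. I expect this case to need the most care.
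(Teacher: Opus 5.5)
Your main argument is correct, and it takes a genuinely different route from the paper.

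\textbf{Comparison with the paper.} The paper never uses $X(M)$. It invokes \refthm{KSS}: the Kalelkar--Schleimer--Segerman theorem connecting $L$--essential triangulations by 2--3 and 0--2 moves, combined with the Neumann--Yang rational formulas at each move. This gives a birational map $\calS(\calT)\to\calS(\calT')$ sending $\zz$ to $\zz'$ and preserving the developed vertices, hence all holonomies. So $dH(r)=0$ transfers from $\zz$ to $\zz'$ and contradicts \refthm{ChoiAlt}.

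You instead build the transfer map by hand from the fixed-point labelling. The analytic lift over $X_0$ that Step~3 needs is just the family of holonomy representations $\rho_{\mathbf w'}$ of $\calT'$, so $X(M)$ is only bookkeeping. Applying Step~3 to $\rho_{\mathbf w'}$ for $\mathbf w'$ near $\zz'$ gives a holomorphic map $\tau=\sigma\circ c'$ from a neighbourhood of $\zz'$ in $\calS(\calT')$ to $\calS(\calT)$. It satisfies $\tau(\zz')=\zz$ and $H_{\calT}(r)\circ\tau=H_{\calT'}(r)$. Since $d\tau$ lands in the Zariski tangent space at $\zz$, the chain rule gives $dH_{\calT'}(r)=0$ at $\zz'$.

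What your route buys:
\begin{enumerate}
\item It avoids the deep combinatorial input of \refthm{KSS}.
\item It needs neither smoothness of $X(M)$ at $\chi$ nor smoothness of $\calS(\calT)$ at $\zz$. These are precisely the checks the paper says a character-variety proof would require.
\item In the $\tau$ form, it uses from \refthm{ChoiAlt} only that $dH_{\calT'}(r)\neq 0$.
\end{enumerate}
In return, the paper's route yields a global birational equivalence of shape varieties, not just a local analytic map.

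\textbf{The case $r=s_{j_0}$.} This is not a real obstacle, and Dehn-surgery coordinates are beside the point. Your Step~1 definition of $\Phi_{j_0}$ through the eigenvalue of $\chi(r)$ does fail here. The reason is that $\chi(s_{j_0})$ is trivial, and the multiplier is not an analytic function of the trace near $\pm 2$.

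Instead, work with the fixed point you already use in Step~3. Let $p_{j_0}(x)$ be the distinguished fixed point of the loxodromic $\rho_x(r'_{j_0})$. The whole peripheral image fixes $p_{j_0}(x)$, because it commutes with $\rho_x(r'_{j_0})$. For any slope $r$ on cusp $j_0$, define $\Phi_r(x)$ to be the logarithm of the multiplier of $\rho_x(r)$ at $p_{j_0}(x)$.

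Both $\calT'$ near $\zz'$ and your $\sigma$ place the cusp-$j_0$ vertices at $p_{j_0}(x)$. Hence, up to additive constants, $H_{\calT'}(r)=\Phi_r\circ c'$ and $H_{\calT}(r)\circ\sigma=\Phi_r$ for every slope on that cusp, $s_{j_0}$ included. Steps~2 and~4 then go through verbatim. This is also why the paper's proof needs no separate treatment of this case: its birational map preserves the developed vertices, and hence the holonomy of every slope.
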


In Choi's work, \refcor{ChoiAlt} is stated in the special case where
$M$ is the complement of the figure-eight knot,
$\calT$ is its two-tetrahedron triangulation, and
$\ss$ is a particular cone-manifold filling. 
Her argument uses the smoothness of the character variety $X(M)$ as well as the fact that the algebraic map $\calS(\calT) \to X(M)$ is a local isomorphism at the given point. 
See \cite[Remark~(i) after Theorem~4.17]{Choi:PosOriented}.

To derive the full statement of \refcor{ChoiAlt} in the same manner, one would again need to check the smoothness of the character variety $X(M)$ as well as the quality of the algebraic map $\calS(\calT) \to X(M)$.
For completeness, we will give a full proof of \refcor{ChoiAlt} in the next subsection.
However, our proof uses the perspective of paths of essential triangulations~\cite{KalelkarSchleimerSegerman24} rather than character varieties.

\subsection{Essential triangulations}

\begin{definition}
Suppose that $M$ is a connected, compact, oriented three-manifold with non-empty boundary.
Suppose that $\cover{M}$ is its universal cover.
Let $\Delta_M$ be the set of boundary components of $\cover{M}$.

Suppose that $\calL$ is any set, equipped with an action of $\pi_1(M)$. 
The elements of $\calL$ are called \emph{labels}.
A function $L \from \Delta_M \to \calL$ is a \emph{labelling} if it is $\pi_1(M)$--equivariant.
\end{definition}

\begin{definition}
Suppose that $M$, $\calL$, and $L$ are as in the previous definition. 
Suppose that $\calT$ is an ideal triangulation of $M$. 
The triangulation $\calT$ is \emph{$L$--essential} if, for every edge $e$ of $\cover{\calT}$, the endpoints of $e$ have distinct labels (under $L$). 
\end{definition}

For a simple example of a labelling, take $\calL = \Delta_M$ and take $L$ to be the identity function.  
Here $L$--essentiality reduces to essentiality:
no edge is properly homotopic into  $\bdy \cover{M}$.

\begin{definition}
\label{Def:LabelFromShapes}
Suppose that $\calT$ is an ideal triangulation of $M$, and $\zz \in \calS(\calT)$.
We lift a tetrahedron $t_0$ to $\HH^3$ to have ideal points at $\infty$, $0$, $1$, and $z_0$. 
As described above, this induces a developing map $\Dev \from \cover{M} \to \HH^3$.
The endpoints of the lifted tetrahedra give a collection $\calL$ of points in $\bdy_\infty \HH^3$. 
The endpoint map $L \from \Delta_M \to \calL$ is the \emph{labelling induced by the shapes $\zz$}. 
\end{definition}

Recall that none of the shapes $z_i$ are allowed to be $0$, $1$, or $\infty$.
We deduce that a labelling $L$ induced by the shapes makes $\calT$ an $L$--essential triangulation.

The following is a consequence of a result of Kalelkar, Schleimer, and Segerman~\cite[Theorem~6.1]{KalelkarSchleimerSegerman24}.

\begin{theorem}
\label{Thm:KSS}
Suppose that $M$ is a $k$--cusped hyperbolic three-manifold. 
Suppose that $\ss$ is a tuple of signed slopes on the cusps of $M$.
Suppose that $M(\ss)$ is a hyperbolic three-manifold, and that $(\calT, \zz)$ and $(\calT', \zz')$ are spun triangulations for $M(\ss)$.
Then there is a birational map from the shape variety $\calS(\calT)$ to $\calS(\calT')$ that takes $\zz$ to $\zz'$.
\end{theorem}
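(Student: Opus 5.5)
The plan is to reduce the statement to the combinatorial connectivity result of Kalelkar, Schleimer, and Segerman~\cite[Theorem~6.1]{KalelkarSchleimerSegerman24}. That result asserts that any two $L$--essential ideal triangulations of $M$, for a fixed labelling $L$, are connected by a finite sequence of $2$--$3$ and $3$--$2$ moves. Moreover, every intermediate triangulation in the sequence is again $L$--essential. So the work splits into two parts. First, produce a single labelling $L$ for which both $\calT$ and $\calT'$ are $L$--essential. Second, show that each move along the path induces a birational map of shape varieties that carries the "geometric" shapes to the "geometric" shapes.

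\emph{Step 1: a common labelling.} Identify the universal cover of $M(\ss)$ with $\HH^3$, with holonomy $\rho^{M(\ss)}$. Because $(\calT, \zz)$ is a spun triangulation, after a global conjugation its developing map is equivariant with respect to $\rho^{M(\ss)} \circ \psi_\ss$. Because it is signed, each component of $\Delta_M$ is sent to the positive endpoint of the corresponding lift of some $\gamma_j$.

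Define the label set $\calL \subset \bdy_\infty \HH^3$ to be the set of positive endpoints of all lifts of all the $\gamma_j$, with $\pi_1(M)$ acting through $\rho^{M(\ss)} \circ \psi_\ss$. Define $L$ to send a boundary component of $\cover{M}$ to the positive endpoint of the lift of the core curve that it spins around. This $L$ is intrinsic to $M(\ss)$: it depends only on the correspondence between boundary components of $\cover{M}$ and lifts of cores, coming from $\cover{M} \subset \cover{M(\ss)}$. Hence the same $L$ coincides with the labelling induced by $\zz$ (as in \refdef{LabelFromShapes}, up to the conjugation) and with the one induced by $\zz'$.

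I expect this identification to be the main subtle point. One must check that both developing maps can be normalised simultaneously, using that the representations agree up to conjugation and that centralisers are trivial for the discrete faithful representation. One must also check that the signed condition is exactly what pins down the endpoint, so the two labellings agree rather than differing by swapping ends. Since $\zz$ and $\zz'$ have nondegenerate shapes, $\calT$ and $\calT'$ are then both $L$--essential.

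\emph{Step 2: moves give birational maps.} Apply \cite[Theorem~6.1]{KalelkarSchleimerSegerman24} to obtain a path $\calT = \calT_0, \calT_1, \ldots, \calT_n = \calT'$ of $L$--essential triangulations related by $2$--$3$ and $3$--$2$ moves. For each $\calT_m$, define shapes $\zz_m$ by placing each lifted tetrahedron at the labels of its four vertices and taking cross-ratios. Every pair of vertices of a tetrahedron spans an edge, so $L$--essentiality makes the four labels distinct, and the shapes avoid $0$, $1$, $\infty$. The edge equations hold because these shapes come from an honest equivariant map to $\bdy_\infty \HH^3$. Thus $\zz_m \in \calS(\calT_m)$, with $\zz_0 = \zz$ and $\zz_n = \zz'$.

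For a $2$--$3$ move, the three new shapes are explicit rational functions of the two old ones; for example, each new shape is a product of two old edge parameters. Conversely, for a $3$--$2$ move the old shapes are rational functions of the new ones, on the locus where the degree-three edge equation holds. These maps are mutually inverse where both are defined, so each move gives a birational map $\calS(\calT_m) \dashrightarrow \calS(\calT_{m+1})$. This map is regular at $\zz_m$ and sends it to $\zz_{m+1}$, since both sides are computed from the same vertex labels.

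Composing the maps along the path gives the required birational map $\calS(\calT) \dashrightarrow \calS(\calT')$ taking $\zz$ to $\zz'$. To make the composition legitimate, I would check that at each stage the image of $\zz_m$ lies in the domain of the next map. This holds because the relevant denominators are differences of distinct labels, hence nonzero. With that check, the birational maps compose well at the chosen points.
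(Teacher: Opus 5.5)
Your strategy matches the paper's. You build a single labelling $L$ from the hyperbolic structure on $M(\ss)$ and the signs of the slopes, take a path of $L$--essential triangulations from \cite[Theorem~6.1]{KalelkarSchleimerSegerman24}, and compose the rational maps induced by the moves. Your Step~1 is more careful than the paper's sketch, since you define $L$ intrinsically from the axes of $\rho^{M(\ss)} \circ \psi_\ss$ on peripheral stabilisers instead of asserting that the two normalised representations are equal.

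\textbf{The gap: you have misquoted the connectivity theorem.} The path that Theorem~6.1 of \cite{KalelkarSchleimerSegerman24} produces uses $2$--$3$, $3$--$2$, $0$--$2$, \emph{and} $2$--$0$ moves. The paper's proof lists all four, and the cited paper is titled ``via 2--3 and 0--2 moves''. Your Step~2 treats only $2$--$3$ and $3$--$2$ moves. Its mechanism (new shapes as products of two old edge parameters, with the inverse valid on the degree-three edge equation) does not cover a $0$--$2$ move. That move inserts two tetrahedra, splits an edge $e$ into two, and creates a new degree-two edge. So your induction, as written, is missing a case.

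\textbf{How to fill it.} A $0$--$2$ move opens up two faces $f_1, f_2$ incident to $e$ and inserts a pillow of two tetrahedra. No old tetrahedron is removed.
\begin{itemize}
\item Both pillow tetrahedra have the same four ideal vertices: the endpoints of $e$ and the vertices of $f_1$ and $f_2$ opposite $e$.
\item The new triangulation is $L$--essential, in particular at the new degree-two edge joining those two opposite vertices. So the four labels are distinct.
\item Hence the pillow shapes are nondegenerate cross-ratios of labels already present among the old tetrahedra. They are therefore rational functions of the old shapes, regular at $\zz_m$.
\item The two pillow tetrahedra sit on the same four points with opposite orientations. Their shapes are mutually inverse, so the degree-two edge equation holds automatically. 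At most one of them can be positively oriented, which is harmless here, since intermediate triangulations need only nondegenerate shapes.
\item The $2$--$0$ move is the coordinate projection that forgets the two pillow shapes. It is inverse to the map above wherever both are defined.
\end{itemize}
With this case added, your composition argument goes through as in the paper. The paper does not write out any of the moves explicitly; it delegates rationality in both directions, for all four moves, to \cite{NeumannYang}.
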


\begin{proof}[Proof sketch]
As described in \refdef{LabelFromShapes}, the  developing map $\Dev \from \cover{M} \to \HH^3$ induces a map $\bdy \Dev \from \Delta_M \to \bdy \HH^3$. 
This is the desired labelling $L$, induced by the shapes $\zz$. 

Since $(\calT', \zz')$ is also a spun triangulation of $M(\ss)$, \refdef{Unsigned} implies that $\rho_\zz$ and $\rho_{\zz'}$ are conjugate representations.
In fact, since we have normalised the developing map into a standard position, the representations are equal. 
Furthermore, since $(\calT, \zz)$ and $(\calT', \zz')$ are both signed spun triangulations for the same oriented tuple $\ss$, the tips of the tetrahedra of $\calT'$ spin about the cores of the surgery solid tori in the same direction as those of $\calT$. 
Thus $(\calT, \zz)$ and $(\calT', \zz')$ yield the same labelling $L$.

Theorem~6.1 of~\cite{KalelkarSchleimerSegerman24} now gives a sequence of $L$--essential triangulations 
\[
\calT = \calT_0, \calT_1, \ldots, \calT_n = \calT'
\]
so that $\calT_{j+1}$ is obtained from $\calT_{j}$ by a 2--3, 3--2, 0--2, or 2--0 move.
Let $\calS_j = \calS(\calT_j)$ be the shape variety of $\calT_j$. 

Suppose by induction we have obtained shapes $\zz_j$ for $\calT_j$. 
Suppose further that $(\calT_j, \zz_j)$ is a spun triangulation of $M(\ss)$. 
Suppose finally that we have a birational map $\Phi_j \from \calS_0 \to \calS_j$ taking $\zz_0$ to $\zz_j$. 

We now carry out the induction step. 
Recall that $\calT_j$ is related to $\calT_{j + 1}$ via a move. 
Since both triangulations are $L$--essential, every edge of $\calT_{j+1}$ has distinct labels. 
So, under its developing map, every tetrahedron of $\calT_{j+1}$ has vertices at four distinct points in $\bdy \HH^3$.  
That is, the shapes $\zz_j$ induce shapes $\zz_{j+1}$. 
Furthermore, as described in Neumann and Yang \cite[proof of Proposition~10.1]{NeumannYang},
the shapes of the tetrahedra in $\calT_{j + 1}$ are rational functions of the 
shapes in $\calT_{j}$, and vice versa.
We may thus compose to obtain the next map $\Phi_{j+1}$, completing the induction. 
\end{proof}

Combining \refthm{ChoiAlt} with \refthm{KSS} yields a proof of \refcor{ChoiAlt}.

\begin{proof}[Proof of \refcor{ChoiAlt}]
Suppose that $r$ is the given slope so that $d H(r) = 0$ at $\zz \in \calS(\calT)$. 

Suppose now that $(\calT', \zz')$ is another spun triangulation of $M(\ss)$. 
By Theorem~\ref{Thm:KSS}, there is a birational map from $\calS(\calT)$ to $\calS(\calT')$ taking $\zz$ to $\zz'$. 
Thus $d H(r) = 0$ at $\zz' \in \calS(\calT')$. 
It follows that $d H(r)$ is not an element of a basis for the cotangent space at $\zz'$.
Thus, by Theorem~\ref{Thm:ChoiAlt}, the spun triangulation $(\calT', \zz')$ is not positive.
\end{proof}

\section{Spun triangulations of \VolT}\label{Sec:SpunVol3}

In this section, we consider spun triangulations of \VolT~ given by the Dehn fillings $\mdoubleoseven(3, 1)$ and $\mooneo(-1, 2)$.
We will find slopes on the boundary tori of \mdoubleoseven~ and \mooneo, respectively, such that in the hyperbolic structure given by the respective Dehn fillings, the derivative of the log holonomy is zero.
This and \refcor{ChoiAlt} implies that the corresponding Dehn filling does not admit a positive spun triangulation.

\begin{proposition}
\label{Prop:m007(3,1)}
Let $M$ be the SnapPy cusped census manifold \mdoubleoseven. 
The $(3, 1)$ Dehn filling of $M$ has no positive spun triangulation. Consequently, the pair $(\VolT, \gamma)$, where $\gamma$ is a shortest closed geodesic, has no positive spun triangulation.
\end{proposition}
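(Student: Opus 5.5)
The plan is to apply \refcor{ChoiAlt} with $M = \mdoubleoseven$ and $\ss = ((3,1))$. \mdoubleoseven\ is one-cusped, so we must produce three things: an ideal triangulation $\calT$ of $M$, a point $\zz \in \calS(\calT)$ making $(\calT,\zz)$ a spun triangulation of $M(3,1) = \VolT$, and a slope $r$ on the cusp with $dH(r) = 0$ at $\zz$. For $\calT$ we take SnapPy's three-tetrahedron triangulation of \mdoubleoseven. Then $\calS(\calT)$ is cut out by three edge equations in $z_1,z_2,z_3$, one of which is redundant. We write these equations explicitly, together with the meridian and longitude holonomies $h(\mu)$ and $h(\lambda)$ as rational functions of the shapes, with signs fixed by the SnapPy convention of \refsec{Shape}.

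Next we locate $\zz$. Adjoining the holonomy equation $h(3\mu+\lambda) = 1$ gives a zero-dimensional system. We eliminate variables, for instance in Macaulay2, to obtain a minimal polynomial for one shape over $\QQ$. The relevant root is the one matching the numerical shapes SnapPy reports for $\mdoubleoseven(3,1)$, and the other shapes are rational functions of it. We then check that $(\calT,\zz)$ is a spun triangulation in the sense of \refdef{Unsigned}: the holonomy $\rho_\zz$ should factor through $\pi_1(\VolT)$ and be discrete and faithful there. A convenient way is to compare traces with the known arithmetic description of \VolT\ (Jones--Reid), or to certify with interval arithmetic that the shapes are those of the hyperbolic Dehn filling, noting that the $(3,1)$ filling is the only hyperbolic solution on the relevant component. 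We also check that the orientation of $s$ gives the correct spinning direction. Since $\gamma$ is the core of the filling solid torus, the consequence about the shortest geodesic follows once we confirm, via SnapPy's length spectrum, that this core is a systole of \VolT.

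The heart of the argument is the vanishing derivative. Near $\zz$, the variety $\calS(\calT)$ is a curve, and we parametrise it locally by one shape, say $z_1$. Implicit differentiation of the edge equations gives $dz_2/dz_1$ and $dz_3/dz_1$, and from these we compute $dH(\mu)/dz_1$ and $dH(\lambda)/dz_1$. The slope $r = p\mu + q\lambda$ has $dH(r) = 0$ exactly when $p\,dH(\mu) + q\,dH(\lambda) = 0$. So we need the ratio $dH(\lambda)/dH(\mu)$ at $\zz$ to be a rational number $-p/q$.

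We would first check numerically, with high precision, that this ratio is rational. Then we would prove it exactly by confirming that $p\,dH(\mu)+q\,dH(\lambda)$ reduces to zero modulo the minimal polynomial in the number field. Since $\zz$ is a smooth point of the curve, \refcor{ChoiAlt} then applies.

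The main obstacle is the existence of such a rational $r$, since generically the ratio is irrational. This is the special feature of \VolT\ that has to be discovered. If the ratio fails to be rational for this triangulation, we would try other triangulations of \mdoubleoseven, or the other filling description $\mooneo(-1,2)$. By \refthm{KSS}, any one witness suffices.
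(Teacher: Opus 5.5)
Your strategy is the paper's: take SnapPy's three-tetrahedron triangulation of \mdoubleoseven, find the points of $\calS(\calT)$ with $h(3\mu+\lambda)=1$, find a slope whose log-holonomy derivative vanishes there, and apply \refcor{ChoiAlt}. The slope you would discover is $r = 3\mu+5\lambda$, i.e.\ $3\,dH(\mu)+5\,dH(\lambda)=0$ on the tangent line. The paper verifies this exactly from the relations $xz=y$, $x+z=2y$, $y^2+y+1=0$, which hold at the filling points.

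However, you fix one root and one ``correct'' spinning direction, and as stated that is not enough. A positive spun triangulation of $(\VolT,\gamma)$ may spin about $\gamma$ in either direction. \refcor{ChoiAlt} only rules out positive triangulations spun in the same direction as $\zz$, because it goes through \refthm{KSS}, which needs the two labellings to agree. So you need a spun point with $dH(r)=0$ for each signed slope $\pm(3\mu+\lambda)$. The paper gets this by proving the vanishing at all four filling solutions (two orientations times two spinning directions). Your framework can recover this, because an identity in $\QQ[x]/(f)$ holds at every root of $f$, and the four solutions are Galois conjugate. Indeed, $x$ is a root of the irreducible quartic $x^4+2x^3+3x^2-4x+1$, and the two solutions with the same orientation differ by swapping $x$ and $z$, which inverts $h(\mu)$. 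But you have to make this argument explicitly.

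Your fallback is also misconceived. By the argument proving \refcor{ChoiAlt}, whether $dH(r)$ vanishes at a spun point does not depend on the triangulation of \mdoubleoseven, so retriangulating cannot help. And $\mooneo(-1,2)$ is \VolT\ spun about a different geodesic $\sigma$, so a witness there says nothing about $(\VolT,\gamma)$; \refthm{KSS} only relates spun triangulations of the same manifold $N-\gamma$.

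The second gap is rigour in the verification steps. Since $dH(r)=0$ at $\zz$, \refthm{ChoiAlt} shows the shapes at $\zz$ are not all positive. By the proposition itself, no triangulation of \VolT\ spun about $\gamma$ is positive. SnapPy's verified computations require a positive spun triangulation. So ``certify with interval arithmetic'' and ``confirm via SnapPy's length spectrum that the core is a systole'' yield only numerical evidence, and the second half of the statement rests on exactly this. The missing idea is the paper's detour through a cover. The filling of the three-cusped triple cover of \mdoubleoseven\ that covers \VolT\ admits a positive spun triangulation. Its length spectrum can therefore be rigorously computed up to $3\ell(\gamma)$, which is enough to certify that $\gamma$ is a shortest geodesic of \VolT. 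To recognise the filling points as spun structures, the paper instead uses the exact algebra together with volumes; your trace comparison with Jones--Reid would also serve.
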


The proof of \refprop{m007(3,1)} is subdivided into Lemmas~\ref{Lem:ShapeParamsM007}, \ref{Lem:DehnFillingEqnsM(3,1)}, and~\ref{Lem:ZeroDerivSlope_5/3}.
For all of these lemmas, we take $\calT$ to be the triangulation of $M = \mdoubleoseven$ provided by SnapPy. 
We begin by giving some defining equations for $\calS(\calT)$ and providing information about the shape parameters for the $(3, 1)$ Dehn filling.

\begin{lemma}
\label{Lem:ShapeParamsM007}
Suppose that $M$ is the SnapPy manifold \mdoubleoseven. 
Suppose that $\calT$ is the ideal triangulation of $M$ given in the SnapPy census. 
Suppose $x$, $y$, and $z$ are shape parameters for the three tetrahedra in $\calT$.
Then the shape variety $\calS(\calT)$ is cut out by:
\begin{align}
x y^2 z &= 1 \label{Eqn:Gluing1} \\
xz - x + y - z &= 0 \label{Eqn:Gluing2}
\end{align}
Furthermore, $\calS(\calT)$ is smooth.
\end{lemma}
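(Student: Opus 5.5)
We will read off the gluing data of the SnapPy triangulation $\calT$ of \mdoubleoseven\ and write down the three edge equations explicitly, then reduce them to the two displayed equations and check smoothness with the Jacobian criterion. First, we will extract from SnapPy (or from its isomorphism signature) the face pairings of the three tetrahedra. For each of the three edges of $\calT$ we record which model edges are identified to it, and with which edge parameter: one of $w$, $w' = 1/(1-w)$, or $w'' = (w-1)/w$ for $w \in \{x, y, z\}$. This gives three edge equations, each a product of such parameters set equal to $1$. Clearing denominators turns each into a polynomial identity in $x, y, z$, valid on the open set where none of $x, y, z$ is $0$ or $1$.

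Next we will simplify. The product of all three edge equations is automatically satisfied, since each tetrahedron contributes $w w' w'' = -1$ twice and so each factor squares to $1$. Thus only two edge equations are independent, and we expect one of them to be the monomial relation $x y^2 z = 1$. This is \eqref{Eqn:Gluing1}. It arises if one edge carries exactly the parameters $x, y, y, z$ after the $w'$ and $w''$ factors cancel in pairs, or after we rewrite using $w w' w'' = -1$. The remaining edge equation, after clearing denominators and using \eqref{Eqn:Gluing1} to eliminate high-degree terms, should reduce to the bilinear relation $xz - x + y - z = 0$. This is \eqref{Eqn:Gluing2}. We will then check the converse: on $(\CC - \{0,1\})^3$, the two displayed equations imply all three edge equations. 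For this we use the product relation and, where a factorisation introduces extraneous factors such as $(x-1)$ or $z$, we will verify that those factors do not vanish on the allowed domain. We will sanity check the result with the complete structure of \mdoubleoseven, whose shapes are known numerically.

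For smoothness, the Jacobian of $(x y^2 z - 1,\ xz - x + y - z)$ is
\[
\begin{pmatrix} y^2 z & 2xyz & x y^2 \\ z - 1 & 1 & x - 1 \end{pmatrix}.
\]
We must show that it has rank $2$ at every point of $\calS(\calT)$. Suppose the rank drops, so the rows are proportional. Since $x, y, z \neq 0$, the first row is nonzero. Dividing the first row by $xy^2z = 1$ gives the row $(1/x,\ 2/y,\ 1/z)$, so proportionality becomes $z - 1 = c/x$, $1 = 2c/y$ and $x - 1 = c/z$. Hence $c = y/2$, $x(z-1) = y/2$ and $z(x-1) = y/2$. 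Together these force $x = z$. Substituting $x = z$ into \eqref{Eqn:Gluing2} gives $x^2 - 2x + y = 0$, while the condition $x(x-1) = y/2$ gives $2x^2 - 2x = y$. Combining the two yields $x^2 = 0$, which contradicts $x \neq 0$. So the singular locus is empty and $\calS(\calT)$ is a smooth curve.

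The main obstacle is the first step: correctly transcribing SnapPy's gluing data with its edge-parameter convention, and performing the algebraic reduction to exactly these two equations without introducing or losing components. We will rely on a computer algebra check (for example Macaulay2) to confirm that the ideal generated by the cleared edge equations, saturated by $x(x-1)y(y-1)z(z-1)$, equals the ideal generated by the two displayed equations, again after saturation.
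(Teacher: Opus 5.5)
Your route is the same as the paper's: write the three edge equations from SnapPy's gluing data, discard one because every shape parameter occurs with total exponent $2$ (so the product of the three left-hand sides is identically $1$), simplify, and apply the Jacobian criterion. In the paper the reduction does not need \eqref{Eqn:Gluing1}. One discards the long second edge equation, and the third is $(x-1)(z-1)/(1-y) = 1$, which is equivalent to \eqref{Eqn:Gluing2} because $y \neq 1$.

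There is, however, an algebra error in your smoothness check. With $x = z$, the relation $x(x-1) = y/2$ gives $y = 2x^2 - 2x$. Substituting this into $x^2 - 2x + y = 0$ gives $3x^2 - 4x = 0$, not $x^2 = 0$; it looks like a sign slip to $y = 2x - 2x^2$. So besides $x = 0$ there is the candidate $x = z = 4/3$, $y = 8/9$, and at this point the Jacobian really does have rank one. The rows there are $(256/243, 256/81, 256/243)$ and $(1/3, 1, 1/3)$, which are proportional.

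Your proportionality conditions $x(z-1) = y/2 = z(x-1)$ come out the same whether or not you first normalise the row by $xy^2z = 1$. So your argument has not yet used \eqref{Eqn:Gluing1} as a constraint, and that is exactly what rules out the candidate: there $xy^2z = (4/3)^2(8/9)^2 = 1024/729 \neq 1$, so the point is not on $\calS(\calT)$. Add that line and your argument is a complete hand verification. It is the computation the paper leaves as ``by hand or using Macaulay2'' when it asserts that the $2 \times 2$ minors have no common zero on $\calS(\calT)$.
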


\begin{proof}
Each of the three edges of $\calT$ gives an edge equation. 
Following \refsec{Shape}, these equations cut out the shape variety $\calS(\calT)$.
SnapPy encodes the three equations in a matrix:
\[
\begin{pmatrix}
1 & 0 & 0 & 2 & 0 & 0 & 1 & 0 & 0 \\
0 & 2 & 1 & 0 & 1 & 2 & 0 & 2 & 1 \\
1 & 0 & 1 & 0 & 1 & 0 & 1 & 0 & 1
\end{pmatrix}
\]
There is one row for each edge and one column for each shape parameter. 
An entry of $k$ in the $i$-th row and $j$-th column means that $j$-th shape parameter appears in the $i$-th edge equation with power $k$. 
For the convenience of the reader we recall that the shape parameters, in order, are as follows: 
\[
x, \quad \frac{1}{(1 - x)}, \quad \frac{(x - 1)}{x}, \qquad
y, \quad \frac{1}{(1 - y)}, \quad \frac{(y - 1)}{y}, \qquad
z, \quad \frac{1}{(1 - z)}, \quad \frac{(z - 1)}{z}
\]
Accordingly, the edge equations are as follows:
\begin{align*}
x \cdot y^2 \cdot z &= 1 \\
\frac{1}{(1 - x)^2} \cdot \frac{(x - 1)}{x} \cdot \frac{1}{(1 - y)} \cdot \frac{(y - 1)^2}{y^2} \cdot \frac{1}{(1 - z)^2} \cdot \frac{(z - 1)}{z} &= 1 \\
x \cdot \frac{(x - 1)}{x} \cdot \frac{1}{(1 - y)} \cdot z \cdot \frac{(z - 1)}{z} &= 1
\end{align*}
Note the sum of the entries of every column in the matrix is $2$.
Thus the product of the left-hand-sides of the edge equations is $1$. 
Thus we may omit one of the equations.
We omit the second equation and simplify the third to obtain:
\begin{align*}
x y^2 z &= 1 \\ 
xz - x + y - z &= 0 
\end{align*}
This gives equations~\eqref{Eqn:Gluing1} and~\eqref{Eqn:Gluing2}, which cut out the shape variety $\calS(\calT)$.

The derivatives of the left-hand-sides of equations~\eqref{Eqn:Gluing1} and~\eqref{Eqn:Gluing2} give the following cotangent vectors in $\CC^3$: 
\[
\begin{array}{rrrr}
(&\hspace{-10pt} y^2 z, & 2xyz, &  xy^2) \\  
(&\hspace{-10pt} z - 1, &    1, & x - 1)     
\end{array}
\]
Scaling does not alter their span; 
so we may divide by $y$ to obtain the following.
\[
\begin{array}{rrrr}  
(&\hspace{-10pt}    yz, & 2xz, &    xy) \\  
(&\hspace{-10pt} z - 1, &   1, & x - 1)     
\end{array}
\]
For $\calS(\calT)$ to be smooth, it suffices to show that these co-vectors are linearly independent at all points of $\calS(\calT)$. 
This condition will hold if the three maximal (two-by-two) minors have no common zero on $\calS(\calT)$. 
This can be checked by hand or using Macaulay2~\cite{Macaulay2}.
\end{proof}











\begin{lemma}
\label{Lem:DehnFillingEqnsM(3,1)}
Suppose the same hypotheses as in \reflem{ShapeParamsM007} and adopt the same notation. 
The solutions to~\eqref{Eqn:Gluing1} and~\eqref{Eqn:Gluing2} that satisfy the holonomy equation for the $(3, 1)$ Dehn filling of $M$, and have volume different from zero and $\pm \Vol(M)$, additionally satisfy:
\begin{align}
y^2 + y + 1 = 0 \label{Eqn:Relation3}
\end{align}
Furthermore, equations~\eqref{Eqn:Gluing1},~\eqref{Eqn:Gluing2} and~\eqref{Eqn:Relation3} have four solutions. 
\end{lemma}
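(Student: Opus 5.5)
We need the holonomies of the meridian $\mu$ and longitude $\lambda$ of $M = \mdoubleoseven$ as rational functions of $x, y, z$. We will read these off from SnapPy's cusp equation data, which lists the exponents of the shape parameters $z_i, z_i', z_i''$ in $h(\mu)$ and $h(\lambda)$, in the same way the edge equations were extracted in the proof of \reflem{ShapeParamsM007}. The holonomy equation for the $(3,1)$ filling is then
\[
h(\mu)^3 h(\lambda) = 1 .
\]
After clearing denominators, this becomes a polynomial equation $P(x,y,z) = 0$.

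Next we eliminate variables. Equation~\eqref{Eqn:Gluing2} is linear in $x$ and gives $x = (z - y)/(z - 1)$; here $z \neq 1$ by definition of the shape variety. Substituting into~\eqref{Eqn:Gluing1} gives
\[
y^2 z (z - y) = z - 1 .
\]
This is a plane curve, and it is birational to $\calS(\calT)$. Substituting the same expression for $x$ into $P$ gives a second polynomial in $(y,z)$. We then compute the resultant of these two polynomials with respect to $z$, or equivalently a Gröbner basis elimination in Macaulay2. The result is a one-variable polynomial $R(y)$ whose roots are the $y$-coordinates of all solutions.

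We then factor $R(y)$ over $\QQ$. The expectation is that the factors fall into three groups:
\begin{itemize}
\item factors whose solutions have volume zero, for example real or degenerate points;
\item factors giving the complete structure and its conjugate, with volume $\pm \Vol(M)$ (these occur because $h(\mu) = h(\lambda) = 1$ solves every holonomy equation);
\item the factor $y^2 + y + 1$.
\end{itemize}
To justify discarding the first two groups, we evaluate the volume, computed as the sum of Bloch--Wigner dilogarithms of the shapes, numerically at each root. Certified or high-precision numerics are enough to tell the three values $0$, $\pm\Vol(M)$ and the filled volume apart. This leaves exactly the solutions satisfying~\eqref{Eqn:Relation3}.

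For the count, let $y$ be a primitive cube root of unity, so $y^2 = -y - 1$. The equation $y^2 z(z-y) = z-1$ is then a quadratic in $z$, so it has at most two solutions for each $y$. We check that its discriminant is nonzero and that neither root is $0$ or $1$; this gives two values of $z$. For each, $x = (z-y)/(z-1)$ is determined, and we check that $x \notin \{0,1\}$. Two choices of $y$ times two choices of $z$ give four solutions.

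The main obstacle is the elimination and factorisation step. We must get the cusp holonomy exponents and the orientation conventions exactly right, and then certify that no other factor of $R(y)$ yields a volume outside $\{0, \pm\Vol(M)\}$. I would do this with Macaulay2 for the factorisation, combined with interval arithmetic for the volume evaluations.
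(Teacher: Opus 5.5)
Your plan is correct, but it takes a different route from the paper. You eliminate by brute force: compute a resultant or Gr\"obner basis, factor $R(y)$, and sort the roots by volume. The paper instead works entirely by hand from one observation. Multiplying the cleared holonomy equation $x(z-1)^2=(1-x)^2y^2z^3$ by $x$ and applying \eqref{Eqn:Gluing1} gives $\bigl(x(z-1)\bigr)^2=\bigl((x-1)z\bigr)^2$. So on $\calS(\calT)$ the $(3,1)$ holonomy equation factors as $(z-x)(2xz-x-z)=0$.

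\begin{itemize}
\item On the branch $x=z$, the edge equations become $x^2y^2=1$ and $x^2-2x+y=0$. The sign $xy=1$ yields only real or degenerate shapes. The sign $xy=-1$ yields $x^3-2x^2-1=0$; its real root again has real shapes, and its complex pair is the complete structure and its conjugate (via $h(\mu)=h(\lambda)=1$).
\item On the branch $2xz=x+z$, equation \eqref{Eqn:Gluing2} forces $xz=y$ and $x+z=2y$, and then \eqref{Eqn:Gluing1} gives $y^3=1$.
\end{itemize}

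What the paper's route buys:
\begin{itemize}
\item It explains where \eqref{Eqn:Relation3} comes from.
\item It needs no elimination software.
\item It certifies the discarded volumes exactly.
\item It produces the relations $xz=y$ and $x+z=2y$ used in the next lemma, though these also follow directly from \eqref{Eqn:Gluing1}, \eqref{Eqn:Gluing2} and \eqref{Eqn:Relation3}.
\end{itemize}
What yours buys:
\begin{itemize}
\item It is mechanical and transfers unchanged to other fillings; it is essentially what the paper does for $\mooneo(-1,2)$.
\item Your count via the quadratic $y^2z^2-2z+1=0$, checking the discriminant and excluding $0$ and $1$, is more careful than the paper's one-line count.
\end{itemize}

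Two cautions if you carry out your version.

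First, the discarded roots of $R(y)$ are those of $y^2+y-1$ and of the irreducible cubic $y^3+2y+1$. There may also be extraneous factors, such as $y-1$, coming from clearing denominators. The cubic mixes a real, zero-volume root with the complete-structure pair. So your expected grouping by rational factors does not hold: you must sort root by root, not factor by factor.

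Second, interval arithmetic can only place a volume near $0$ or near $\pm\Vol(M)$. To exclude a solution under the lemma's hypothesis you need exact equality. That should come from exact arguments:
\begin{itemize}
\item the shapes being real, which gives Bloch--Wigner volume exactly $0$;
\item an exact check of $h(\mu)=h(\lambda)=1$ together with positivity of the shapes, which identifies the complete structure and hence $\pm\Vol(M)$.
\end{itemize}
Numerics should only decide which of these cases applies.
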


\begin{proof}
Suppose that $s$ is the slope $3\mu + \lambda$ in $\bdy M$.
To obtain the shapes of the $(3, 1)$ filling, we add the holonomy equation $h(s) = 1$ to our list of conditions.
SnapPy again encodes the holonomies of the meridian and longitude in a matrix:
\[
\begin{pmatrix*}[r]
0 &            1 & \phantom{-}0 & -1 &  0 &  0 &  0 & \phantom{-}0 &  1 \\
2 & \phantom{-}0 &            1 &  0 & -1 & -1 & -1 &            0 & -1
\end{pmatrix*}
\]
The first row corresponds to the meridian $\mu$  and the second to the longitude $\lambda$.
The columns again correspond to the shape parameters.
Thus the holonomy equations are as follows:
\begin{align}
h(\mu) & = \frac{1}{(1 - x)} \cdot \frac{1}{y} \cdot \frac{(z - 1)}{z} \label{Eqn:HolonomyM} \\
h(\lambda) & = x^2 \cdot \frac{(x - 1)}{x} \cdot (1 - y) \cdot \frac{y}{(y - 1)} \cdot \frac{1}{z} \cdot \frac{z}{(z - 1)} 
     = x  \cdot (x - 1) \cdot (-y) \cdot \frac{1}{(z - 1)} \label{Eqn:HolonomyL}
\end{align}

Since $h \from H_1(T) \to \CC^*$ is a homomorphism, we compute $h(s)$ as follows:
\begin{align*}
h(s) 
  &= (h(\mu))^3 \cdot h(\lambda) \\
  &= \left( \frac{1}{(1 - x)} \cdot \frac{1}{y} \cdot \frac{(z - 1)}{z}\right)^3 \cdot \left( x \cdot (x - 1) \cdot (-y) \cdot \frac{1}{(z - 1)} \right) \\
  &= x \cdot \frac{1}{(1 - x)^2} \cdot \frac{1}{y^2} \cdot \frac{(z - 1)^2}{z^3}
\end{align*}
Recalling that we have set $h(s) = 1$ we compute as follows:
\begin{align}
x \cdot (z - 1)^2  & = (1 - x)^2 \cdot y^2 \cdot z^3  & \mbox{clear denominators} \nonumber \\
x^2 \cdot (z - 1)^2 & = (1 - x)^2 \cdot x \cdot y^2 \cdot z^3 & \mbox{multiply both sides by $x$} \nonumber \\
x^2 \cdot (z - 1)^2 & = (1 - x)^2 \cdot z^2 & \mbox{apply \eqref{Eqn:Gluing1} } \nonumber \\
\left( x \cdot (z - 1) \right)^2 & = \left( (x - 1) \cdot z \right)^2 \label{Eqn:HolonomyVol3}
\end{align}

We wish to take the square root of \eqref{Eqn:HolonomyVol3};
there are two cases depending on the choice of sign. 
Suppose first that $x (z - 1) = (x - 1) z$. 
Thus $x = z$. 
We substitute this into the equations \eqref{Eqn:Gluing1} and ~\eqref{Eqn:Gluing2} and obtain the following:
\begin{align}
x^2 y^2      &= 1  \label{Eqn:Gluing1Reduced} \\
x^2 - 2x + y &= 0  \label{Eqn:Gluing2Reduced}
\end{align}

We now take the square root of \eqref{Eqn:Gluing1Reduced} and break into subcases, again depending on the choice of sign. 
Suppose first that $xy = 1$. 
We substitute into \eqref{Eqn:Gluing2Reduced} to obtain 
\[
0 = x^3 - 2x^2 + 1 = (x - 1)(x^2 - x - 1)
\]
This polynomial has three real roots. 
So in this subcase all shape parameters are real. 
We deduce that the tetrahedra are flat (or degenerate). 
Thus the volume of the representation is zero (or undefined). 

Next, we assume $xy = -1$. 
In this subcase, substitution into \eqref{Eqn:Gluing2Reduced} gives $x^3 - 2x^2 - 1 = 0$. 
This equation has one real root and two complex-conjugate roots. 
The real root gives a representation with zero volume. 
The other pair of roots give the complete (unfilled) hyperbolic structure on $\mdoubleoseven$ and its complex conjugate.
(To see this, one checks that at these roots $h(\mu) = h(\lambda) = 1$, and that the volume interval contains the volume of $\mdoubleoseven$.
Recall that $h(\mu) = h(\lambda) = 1$ implies the structure is complete;
see~\cite[Proposition~4.15]{Purcell20}.)

The remaining possibility is that the correct square root of \eqref{Eqn:HolonomyVol3} is:
\begin{equation}\label{Eqn:HolonomySqrt}
(x - 1) \cdot z = -x \cdot (z - 1)
\end{equation}
Now equation \eqref{Eqn:Gluing2} says $xz - x - z = -y$ and \eqref{Eqn:HolonomySqrt} says $xz - x - z = -xz$. 
We deduce: 
\begin{equation}
  xz = y \qquad \mbox{and} \qquad x + z = 2y \label{Eqn:Relation1and2}
\end{equation}
Now equation \eqref{Eqn:Gluing1} implies $y^3 = 1$. 
We eliminate the solution with $y = 1$ to obtain \eqref{Eqn:Relation3}.

There are now two solutions for $y$; 
eliminating $z$ yields a quadratic in $x$. 
Thus there are four solutions.  
Additionally, one can check that these solutions have volume in a small interval containing $\pm v_3$, where $v_3 = 1.0149\dots$ equals the volume of \VolT.
\end{proof}

The four solutions of \reflem{DehnFillingEqnsM(3,1)} all correspond to complete hyperbolic structures on $\mdoubleoseven(3, 1)$.
These are spun triangulations with two orientations (of the manifold) and two directions of spinning about the Dehn filling geodesic. 
(This follows from \cite[Lemma 2.24]{KalelkarSchleimerSegerman24}.)

\begin{lemma}
\label{Lem:ZeroDerivSlope_5/3}
Suppose that $r$ is the slope $3\mu + 5\lambda$ in $\bdy M$.
Suppose that $p = (x, y, z)$ is one of the four points given by \reflem{DehnFillingEqnsM(3,1)}.
Then the derivative of the log holonomy $d H(r)$ vanishes
on the tangent space to $\calS(\calT)$ at $p$.
\end{lemma}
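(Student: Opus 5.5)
The plan is to compute $dH(r)$ explicitly and check that it kills a spanning tangent vector at $p$. Since $h$ is a homomorphism on $H_1(T)$, we have
\[
H(r) = 3H(\mu) + 5H(\lambda).
\]
From \eqref{Eqn:HolonomyM} and \eqref{Eqn:HolonomyL} we read off, up to additive constants,
\[
H(\mu) = -\log(1-x) - \log y + \log(z-1) - \log z,
\qquad
H(\lambda) = \log x + \log(x-1) + \log(-y) - \log(z-1).
\]
Differentiating gives
\[
dH(\mu) = \frac{dx}{1-x} - \frac{dy}{y} + \frac{dz}{z(z-1)},
\qquad
dH(\lambda) = \frac{dx}{x} + \frac{dx}{x-1} + \frac{dy}{y} - \frac{dz}{z-1}.
\]
Both are regular at $p$, because none of $x,y,z$ equals $0$ or $1$ there. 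This holds since $y$ is a primitive cube root of unity, and $x,z$ are the roots of $t^2 - 2yt + y$.

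By \reflem{ShapeParamsM007}, $\calS(\calT)$ is a smooth curve, so $T_p\calS(\calT)$ is one-dimensional. It is the common kernel of the covectors $(yz, 2xz, xy)$ and $(z-1, 1, x-1)$ found in that proof. A spanning vector is therefore their cross product:
\[
v = \bigl(2xz(x-1) - xy,\; xy(z-1) - yz(x-1),\; yz - 2xz(z-1)\bigr).
\]
This is nonzero at $p$, precisely because of the smoothness established in \reflem{ShapeParamsM007}. The statement to prove thus reduces to the single identity
\[
3\,dH(\mu)(v) + 5\,dH(\lambda)(v) = 0 \quad \text{at } p.
\]

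To verify it, I will clear denominators and reduce the resulting polynomial modulo the ideal of $p$. By \eqref{Eqn:Relation1and2} and \eqref{Eqn:Relation3}, this ideal is generated by
\[
y^2 + y + 1, \qquad z - (2y - x), \qquad x^2 - 2yx + y.
\]
Substituting $z = 2y - x$ and reducing powers of $x$ and $y$ yields a normal form $c_0(y) + c_1(y)\,x$. Here $c_0$ and $c_1$ are at most linear in $y$, so the normal form is a combination of $1, x, y, xy$. The claim is that all four coefficients vanish. This simultaneously covers all four points $p$, including both orientations and both spinning directions.

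Equivalently, the check amounts to showing
\[
\frac{dH(\mu)(v)}{dH(\lambda)(v)} = -\frac{5}{3}.
\]
This is the infinitesimal (Neumann--Zagier type) statement that the "derivative slope" at $p$ is $5/3$. Computing this ratio directly may be a cleaner sanity check on signs.

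The main obstacle is purely computational: correctly tracking signs and conventions, such as $\log(-y)$ versus $\log y$ and the orientation of the holonomy matrix rows, and carrying out the reduction without error. I would do the reduction both by hand, using the symmetric functions $x + z = 2y$ and $xz = y$ wherever the expression is symmetric in $x$ and $z$, and by a Gröbner basis computation in Macaulay2~\cite{Macaulay2}, mirroring the smoothness check in \reflem{ShapeParamsM007}.
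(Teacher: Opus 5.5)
Your proposal is correct and is essentially the paper's argument: pairing $dH(r)$ with the cross product of the two edge-equation covectors is the same triple product as the paper's $3\times 3$ determinant, and the paper does the reduction by hand using $xz=y$ and $x+z=2y$, as you suggest. The reduction you leave as a claim does go through. For instance, on the tangent vector $w=(2-x,\,x-z,\,z-2)$ (your $v$ equals $-y\,w$ at $p$) one gets $dH(\mu)(w)=5$ and $dH(\lambda)(w)=-3$, using only $xz=y$, $x+z=2y$ and $(x-1)(z-1)=1-y$.
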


\begin{proof}
Recall by \reflem{ShapeParamsM007} that the shape variety is a smooth submanifold of $\CC^3$ cut out by edge equations \eqref{Eqn:Gluing1} and \eqref{Eqn:Gluing2}.
Taking the gradients of the left-hand sides of these equations, we find a pair of cotangent vectors that form a basis for the annihilator of the tangent space to $\calS(\calT)$.
We express these in vector notation as follows: 
\[
\begin{array}{rrrr r rrrr}  
yz\, dx &+& 2xz \, dy &+& xy \, dz & = (&
yz, & 2xz, &    xy) \\ 
(z-1)\, dx &+& dy &+& (x-1)\, dz & = (&\hspace{-10pt} z - 1, &   1, & x - 1)    
\end{array}
\]
At $p$ we have the equality $xz=y$ from \eqref{Eqn:Relation1and2}. We use this to simplify the basis as follows. Substitute $xz=y$ into the top, divide by $y$ (which is a nonzero constant at $p$), and subtract the bottom from the top, to obtain the new basis:
\begin{equation}\label{Eqn:SimplifyOrthoBasis}
\begin{array}{rrrr}  
(&\hspace{-10pt} z, & 2, & x) \\ 
(&\hspace{-10pt} 1, & 1, & 1)    
\end{array}
\end{equation}

Now we consider $d H(r)$, and show that it vanishes on the tangent space to $\calS(\calT)$ at $p$. 
Recall the holonomy equations \eqref{Eqn:HolonomyM} and \eqref{Eqn:HolonomyL}. 
Taking logs, we obtain:
\begin{align*}
H(\mu) &= - \log(1 - x) - \log(y)  + \log(z - 1) - \log(z) \\
H(\lambda) &= \log(x) + \log(x - 1) + \log(-y) - \log(z - 1)
\end{align*}
Differentiating gives:
\begin{align*}
d H(\mu) & = \left(-\frac{1}{x - 1}, \: -\frac{1}{y}, \: \frac{1}{z - 1} - \frac{1}{z} \right) \\
d H(\lambda) & = \left( \frac{1}{x} + \frac{1}{x - 1},  \: \frac{1}{y}, \: -\frac{1}{z - 1}               \right)
\end{align*}
Thus
    \begin{align*}
d H(r) 
  &= 3dH(\mu) + 5dH(\lambda) \\
      &= 
      \left( \frac{2}{x - 1} + \frac{5}{x}, \:
             \frac{2}{y}, \:
            -\frac{2}{z - 1} - \frac{3}{z} \right)
\end{align*}
In order to show that $dH(r)$ vanishes (on the tangent space to $\calS(\calT)$ at $p$), we show that it is a linear combination of the basis vectors \eqref{Eqn:SimplifyOrthoBasis}.
To do so, we take the determinant of the $3\times 3$ matrix given by the basis and $dH(r)$, and show the determinant vanishes. 
The determinant is:
\[
\left(-\frac{2z}{z - 1} - 3 - \frac{2z}{y} \right)
 + \left(\frac{4}{z - 1} + \frac{6}{z} + \frac{4}{x - 1} + \frac{10}{x} \right)
 + \left(\frac{2x}{y}  - \frac{2x}{x - 1} - 5 \right)
\]

Put this over a common denominator and divide by $2$. Rescale to consider the numerator alone, and use \eqref{Eqn:Relation1and2} to replace every instance of $xz$ by $y$. We obtain:
\[
-6y^3 + y^2(-16 + 11(x+z)) + 4y((x+z)-(x^2+z^2))
\]
Now use $x+z=2y$ from \eqref{Eqn:Relation1and2} and the formulas $x^2  = 2yx -y$ and $z^2 = 2yz-y$, which follow from \eqref{Eqn:Relation1and2}, to obtain:
\begin{align*}
-6y^3 + y^2 (-16 +22y) +4y( 2y -2yx + y -2yz + y) & = \\
-6y^3 +22y^3 -16y^2 +16y^2 -8xy^2 -8y^2z &= \\
16y^3 -8y^2(x+z) &= \\
16 y^3 -16 y^3 &= 0. \qedhere 
\end{align*}
\end{proof}

\begin{proof}[Proof of \refprop{m007(3,1)}]
By Lemma~\ref{Lem:ZeroDerivSlope_5/3}, at each point $p$ in the shape variety corresponding to the (3,1) Dehn filling of \mdoubleoseven, there exists a boundary slope $r$ for which the derivative of its log holonomy $dH(r)$ is zero. 
Thus, by \refcor{ChoiAlt}, there is no positive spun triangulation for $\mdoubleoseven(3,1)$.

Next, we recall from \cite{JonesReid} that the three-manifold $\mdoubleoseven(3,1)$ is \VolT.
This manifold and its shortest geodesics were studied in~\cite{GMT}. 
There are four shortest geodesics, permuted by the $\ZZ_2 \oplus \ZZ_2$ symmetry group.

We claim that the core $\gamma$ of the Dehn filling solid torus is one of these four shortest geodesics. 
Since computing rigorously in SnapPy requires a positive spun triangulation, and we just proved that no such triangulation exists, we cannot use SnapPy to verify this directly. 
Instead, we verify this using the cover of $\VolT$ that corresponds to a filling of the unique three-cusped triple cover of \mdoubleoseven. 
(Compare~\cite[page~74, proof of Theorem~1.2]{HIKMOT}.) 
This cover admits a positively oriented spun triangulation: for instance, the one with isosig 
$\texttt{gLLAQbcedefftoaolon\_DbCb(1,0)}$.
We use this triangulation to rigorously compute enough of the length spectrum to find all geodesics up to three times the length of $\gamma$, verifying that $\gamma$ is indeed shortest in $\VolT$. 
\end{proof}

\begin{proposition}
\label{Prop:m010(-1,2)}
Let $M$ be the SnapPy cusped census manifold \mooneo. 
The $(-1, 2)$ Dehn filling of $M$ has no positive spun triangulation. 
Consequently, the pair $(\VolT, \sigma)$, where $\sigma$ is a second-shortest closed geodesic, has no positive spun triangulation.
\end{proposition}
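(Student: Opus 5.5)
The plan is to follow the proof of \refprop{m007(3,1)} step by step, now for $M = \mooneo$ with its SnapPy census triangulation $\calT$.

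\textbf{Step 1: the shape variety.} I will read off the edge-gluing matrix from SnapPy and write down the edge equations. As before, every column sums to $2$, so one equation is redundant and can be dropped. The remaining equations are then simplified to polynomial form. Next I will check that $\calS(\calT)$ is smooth, at least near the relevant points. Concretely, the maximal minors of the Jacobian of the defining equations should have no common zero there. This can be done by hand or with Macaulay2, as in \reflem{ShapeParamsM007}.

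\textbf{Step 2: the filling points.} I will read off the holonomies $h(\mu)$ and $h(\lambda)$ from SnapPy's cusp matrix. The filling equation is $h(s)=1$ with $h(s) = h(\mu)^{-1} h(\lambda)^2$, for the slope $s = -\mu + 2\lambda$.

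\begin{itemize}
\item Clearing denominators and reducing with the edge equations should again factor the system into pieces.
\item I expect factors giving real solutions (zero volume) and the complete structure on $\mooneo$. These are discarded.
\item One remaining factor should cut out finitely many points with volume $\pm v_3$.
\end{itemize}

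I expect four such points, corresponding to two orientations and two spinning directions. Ideally these points are defined over a small number field; I suspect one again involving $y^2+y+1=0$, since $\VolT$ is arithmetic with a trace field related to the one in \reflem{DehnFillingEqnsM(3,1)}. I will identify this as a set of simple relations, analogous to \eqref{Eqn:Relation1and2}.

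\textbf{Step 3: a slope with vanishing derivative.} I will compute $dH(\mu)$ and $dH(\lambda)$ at these points and find integers $a,b$ such that $dH(a\mu+b\lambda)$ lies in the span of the gradients of the edge equations. This amounts to solving the $3\times 3$ (or larger) determinant condition for the ratio $a:b$. The space of cotangent vectors $dH(\cdot)$ is two-dimensional over $\CC$, while the cotangent space of the one-dimensional variety is one-dimensional. So $dH(\mu)$ and $dH(\lambda)$ restricted to the tangent line are proportional, with a complex ratio $\tau$.

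\emph{This is the main obstacle.} The key point is that $\tau$ must turn out to be rational, so that the ratio yields a genuine slope $r$. I will first compute $\tau$ numerically to guess $r$. I will then verify exactly, by substituting the algebraic relations from Step 2 and reducing the resulting numerator to zero, as in \reflem{ZeroDerivSlope_5/3}. If $\tau$ were not real, this approach would fail for this triangulation. In that case I would instead try a different cusped manifold or triangulation that also fills to $\VolT$ along $\sigma$; by \refthm{KSS} this changes nothing essential.

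\textbf{Step 4: conclusion.} \refcor{ChoiAlt} then rules out positive spun triangulations of $\mooneo(-1,2)$. To finish, I will identify $\mooneo(-1,2)$ with $\VolT$, for example by a SnapPy isometry check using a covering as before. I will also identify the core geodesic with a second-shortest geodesic $\sigma$ by rigorously computing the length spectrum in a cover that admits a positive triangulation, as was done for $\gamma$ in the proof of \refprop{m007(3,1)}.
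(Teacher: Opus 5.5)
Your plan is the paper's proof. Carried out, it gives filling points cut out by $xyz=-1$, $x^4-6x^3+12x^2+6x+3=0$ and $y^2=-1/3$ (so the field is $\QQ(\sqrt{-3})$, as you suspected, though not via $y^2+y+1=0$), and the determinant check succeeds for the rational slope $r=2\mu-\lambda$. \refcor{ChoiAlt} and the length-spectrum computation in the HIKMOT triple cover then finish the argument exactly as you describe. Your fallback in Step~3 is harmless but could never help: the complement of $\sigma$ is $\mooneo$ whatever you do, and \refthm{KSS} makes $\tau$ independent of the spun triangulation, so a non-real $\tau$ would defeat the method for $\sigma$ altogether.
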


\begin{proof}
The claim that $N = \mooneo(-1, 2)$ is homeomorphic to $\VolT = \mdoubleoseven(3,1)$ is checked via SnapPy, by finding identical filled triangulations of the two closed manifolds.
Now, we check that the Dehn filling core $\sigma$ is one of the second-shortest geodesics.
As in the proof of \refprop{m007(3,1)}, we build the cyclic three-fold cover $\widehat N$ studied in HIKMOT~\cite{HIKMOT}.
Next, we observe that the rigorously verified second shortest geodesic in $\widehat N$ has length agreeing with $\sigma$, and rigorously check that no other geodesic in $\widehat N$ besides the systole could project to a shorter geodesic in $N$. 

Now, let $\calT$ be the three-tetrahedron SnapPy triangulation of \mooneo.
The rest of the proof follows a similar method to that of \refprop{m007(3,1)}. 
We will first obtain gluing equations in the shape parameters $x$, $y$, and $z$ that cut out the shape variety $\calS(\calT)$. 
We will then derive a third equation for the $s = (-1, 2)$ filling. 
Finally, we will compute $dH(r)$ for the slope $r = (2, -1)$ and show that $dH(r)$ vanishes (on the tangent space to $\calS(\calT)$ at every point corresponding to the Dehn filling along $s$). 
Since this is similar to \refprop{m007(3,1)}, we suppress some of the details.

The SnapPy triangulation $\calT$ of $M$ has three tetrahedra. 
As before, we simplify the SnapPy gluing equations in shape parameters $x$, $y$, and $z$ to obtain two equations that cut out the shape variety $\calS(\calT)$: 
\begin{align}
x^2 y^2 z^2 &= 1 \label{Eqn:GluingM010_1}\\
(x - 1) y^2 (z - 1) - (y - 1)^2 &=0 \label{Eqn:GluingM010_2}
\end{align}
Again, we  check using Macaulay2~\cite{Macaulay2} that these equations cut out a smooth variety. 

%
%
%
%
%

We now pin down the points of interest in the shape variety, corresponding to the $(-1, 2)$ Dehn filling. Again we find the holonomies for $\mu$ and $\lambda$ from SnapPy. 
 \begin{align*}
 h(\mu) &= \frac{1}{x^3} \cdot (x - 1)^2 \cdot \frac{1}{(y-1)^3} \cdot y \cdot (z - 1) \cdot \frac{1}{z} 
  \\
 h(\lambda) &= \frac{1}{x^2} \cdot \frac{1}{(y-1)^2} \cdot (z - 1)^2 \cdot \frac{1}{z^2} 
 \end{align*}

Recall that $s = -\mu + 2\lambda$ is the filling slope.  We 
substitute the holonomies $h(\mu)$ and $h(\lambda)$
into the equation  
$h(s)=h(\mu)^{-1}h(\lambda)^2=1$,
and use \eqref{Eqn:GluingM010_1} to obtain the simplified holonomy equation:
\begin{equation}\label{Eqn:SlopeEquation_M010}
x y (z - 1)^3 = (x - 1)^2  (y - 1)  z
\end{equation}

As in the proof of \reflem{DehnFillingEqnsM(3,1)}, we have two cases to analyse depending on which square root we take in \eqref{Eqn:GluingM010_1}.
First, we show the square root $xyz=1$ cannot give a point in the shape variety corresponding to the Dehn filling $(-1, 2)$.
Using this square root, and eliminating variables one at a time using \eqref{Eqn:GluingM010_2} and \eqref{Eqn:SlopeEquation_M010}, leads to only two solutions: $(x,y,z) = (1,1,1)$ and  $(x,y,z) = (-2, 1/4, -2)$. The first one is degenerate and the second corresponds to flat tetrahedra, so neither one can lead to a geometric structure.
Thus we must replace \eqref{Eqn:GluingM010_1} with the other square root:
\begin{equation}\label{Eqn:Gluing_M010_1_Root}
x y z = -1
\end{equation}
Using \eqref{Eqn:Gluing_M010_1_Root}, the other edge equation \eqref{Eqn:GluingM010_2} simplifies to a quadratic equation in $x$ and $z$:
\begin{equation}\label{Eqn:Gluing_M010_2_Root}
x^2z^2 + xz + x + z = 0 
\end{equation}

We now have the system of equations \eqref{Eqn:Gluing_M010_1_Root}, \eqref{Eqn:Gluing_M010_2_Root}, and \eqref{Eqn:SlopeEquation_M010}.
Eliminating variables one at a time, we find that $x$ satisfies
\[ (x + 1)(x^2 - x + 2)(x^4 - 6x^3 + 12x^2 + 6x + 3) = 0 \]
The term $(x+1)$ gives a solution with flat tetrahedra and zero volume, which we ignore.
The zeros of $x^2 - x + 2 = 0$ give the complete unfilled hyperbolic structure on $\mooneo$ with its two orientations, which we also ignore. 
Thus the desired solutions are zeros of the quartic.
Using this relation for $x$, we additionally find a simple relation for $y$.
We record them both.
\begin{equation}\label{Eqn:Relations_Simple}
x^4 - 6x^3 + 12x^2 + 6x + 3 = 0  \quad \mbox{ and } \quad y^2 = -\frac{1}{3}
\end{equation}
Each solution for $x$ in \eqref{Eqn:Relations_Simple} determines $y$ and $z$. 
Thus we get four solutions, each of which corresponds to a complete hyperbolic structure on $\mooneo(-1, 2)$.
These are spun triangulations with two orientations (of the manifold) and two directions of spinning about the Dehn filling geodesic. 

Next, let $p = (x,y,z)$ be one of the four solutions. We will show that for the slope $r = 2\mu - \lambda$, the derivative of the log holonomy $dH(r)$ vanishes at $p$. Observe that equation  \eqref{Eqn:Gluing_M010_1_Root} holds for all points of the component of $\calS(\calT)$ through $p$, because the value of $xyz$ cannot jump from $-1$ to $1$. 
Thus  \eqref{Eqn:Gluing_M010_1_Root} and \eqref{Eqn:Gluing_M010_2_Root} define this component. 
Differentiate the left-hand sides of the edge equations \eqref{Eqn:Gluing_M010_1_Root} and \eqref{Eqn:Gluing_M010_2_Root} 
to obtain a pair of cotangent vectors that form a basis for the annihilator of the tangent space to $\calS(\calT)$:
\begin{align*}
& (y z,\,  x z,\,  x y) \\
& (2xz^2 + z + 1, 0, 2x^2z + x + 1)
\end{align*}

We differentiate the log holonomy $H(r)$:
\[ dH(r) = \left( \frac{4}{x-1}-\frac{4}{x}, \frac{2}{y}+\frac{4}{1-y}, 0\right)
\]
As before, we check that $dH(r)$ lies in the annihilator by showing that the determinant of the matrix consisting of these three vectors is zero. 
The equations \eqref{Eqn:Gluing_M010_1_Root}, \eqref{Eqn:Gluing_M010_2_Root}, and \eqref{Eqn:Relations_Simple} imply that $x^2z^2 = -3$, hence
$z = \frac{3 - x}{x + 1}$ and therefore $y= \frac{x+1} {x(x-3)}$.
Substituting these into the determinant expression produces a multiple of the defining quartic \eqref{Eqn:Relations_Simple} for $x$, which equals zero.
Thus, by Corollary~\ref{Cor:ChoiAlt}, there can be no positive spun triangulation.
\end{proof}

Combining \refprop{m007(3,1)} with \refprop{m010(-1,2)} gives a proof of \refthm{Holonomies}. \qed

\section{Further computational results}\label{Sec:Further}

The two spun triangulations of \VolT\ that were described in the last section were found using a computer search with SnapPy. Here, we describe the method as well as the additional results that it produced.

\subsection{Our method}\label{Sec:Method}
Suppose that $M$ is a one-cusped hyperbolic manifold and $r$ is a slope on the cusp of $M$.
Observe that if $r = a\mu +b\lambda$, the equation  $dH(r) = 0$ can be written as
$a \, d H(\mu) + b \, d H(\lambda) = 0$. Since the tangent space to $\calS(\calT)$ has (complex) dimension one,
the equation $a \, d H(\mu) + b \, d H(\lambda) = 0$ is equivalent to 
\[
\langle a \, d H(\mu)  + b \, d H(\lambda), \, k \rangle = 0
\]
 for a single nonzero tangent vector $k$. This, in turn, is equivalent to
\begin{equation}\label{Eqn:Ratio}
\frac{ \langle dH(\mu), \, k \rangle}{\langle dH(\lambda), \, k \rangle} \: = \:  -\frac{b}{a} \: \in \: \QQ\PP^1
\end{equation}

We ran a search of the following type.
For a SnapPy census manifold $M$ with one cusp, equipped with the geometric framing $(\mu, \lambda)$, we considered the set of all not-too-long slopes $s$.
For the spun ideal triangulation of $M(s)$, we computed the left-hand side of \eqref{Eqn:Ratio}, and recorded the instances where this ratio was real (or infinite).
Curiously, in all cases where the ratio was real, it turned out to be rational.

\subsection{Other spun triangulations of \VolT}

According to our search, there are 87 Dehn fillings of census manifolds that give $\VolT$, including the fillings $\mdoubleoseven(3,1)$ and $\mooneo(-1, 2)$ that were studied in \refsec{SpunVol3}.
The manifolds that we found are contained in the ancillary files~\cite{FPS:Ancillary}. 
However, the technique of \refsec{SpunVol3} to rule out positive spun triangulations does not work for the 85 additional fillings.
For these Dehn fillings, SnapPy does not find any positive spun triangulations.
However, computing the ratio of \eqref{Eqn:Ratio} produced a non-real complex number, implying that
there is no slope $r$ for which $dH(r)$
vanishes at the corresponding point in the shape variety.
Thus Choi's theorem cannot be applied in these cases.

\subsection{Other pairs $(N, \gamma)$ without positive spun triangulations}\label{Sec:OtherManifolds}
We found a total of 271 pairs $(N, \gamma)$, where $N$ is a closed hyperbolic three-manifold, $\gamma$ is a closed geodesic, $M = N - \gamma$ is a census manifold, and numerical computations suggest that $(N, \gamma)$ does not admit any positive spun triangulation~\cite{FPS:Ancillary}. 
In each instance, the ratio in \eqref{Eqn:Ratio} was approximately a rational number  $-b/a$ with small denominator. 
Accordingly, we have reason to believe that $dH(a\mu+b\lambda) = 0$ at the corresponding point of the shape variety. 

However, our search did not produce any new examples of closed hyperbolic three-manifolds that might conjecturally have no positive spun triangulations.
For example, $\texttt{m389}(-1,1)$ has no positive spun triangulation, spun around the core of its Dehn filling. However, SnapPy identifies this manifold as isometric to $\texttt{m011}(1,3)$, which has a positive spun triangulation consisting of three tetrahedra, spun about a different geodesic.

\subsection{Orbifold pairs}
In addition to the 271 manifold pairs described above, we found 28 pairs $(Q,\gamma)$ where $Q$ is a closed hyperbolic three-orbifold, $\gamma$ is a singular geodesic of cone angle $\pi$, and the pair $(Q,\gamma)$ does not admit any positive spun triangulation. 
The 271 manifold pairs and 28 orbifold pairs are described in the ancillary files~\cite{FPS:Ancillary}.
Observe that since $\gamma$ is singular, there is no positive triangulation of $Q$ that is spun along a single geodesic.

\subsection{How not to prove Casson's conjecture}

Our examples also demonstrate the futility of a certain approach to the generalised Casson conjecture.
For more detailed background we refer to~\cite{FuterGueritaud11}.

Suppose $\calT$ is an ideal triangulation of a cusped finite-volume hyperbolic three-manifold $M$.
Suppose that $\ss$ is a (possibly empty) tuple of signed slopes, with at most one slope per boundary component of $M$. 
An \emph{angle structure} on $(\mathcal T, \ss)$ is a function $\alpha$ from the model edges of $\calT$ (as in \S\ref{Sec:IdealTri}) to the interval $(0,\pi)$ satisfying the following. 
\begin{enumerate}
\item
For each model tetrahedron $t$, we require that the three model edges meeting at any model vertex have angles summing to $\pi$. 
\item
For each edge $e$, the model edges gluing to give $e$ have angles summing to $2\pi$.
\item
For each signed slope $s \in \ss$ we proceed as in \refsec{Holonomies}:
the sum of the angles to the left of $s$, minus the sum of angles to its right, is $2\pi$.
\end{enumerate}
These linear (tetrahedron, edge, slope) equations are exactly the imaginary parts of the logarithmic (tetrahedron, edge, slope) equations.
However, angle structures need not give representations of $\pi_1(M)$.
Thus, having an angle structure gives less information than having a point of $\calS(\calT)$.
We use $\calA(\calT, \ss)$ to denote the resulting (open) polytope of angle structures.

Applying the Lobachevsky function to each angle and summing gives a volume functional $\calV \from \calA(\calT, \ss) \to \RR$, which is strictly concave down. 
Casson and  Rivin showed~\cite[Theorems~1.2 and 6.1]{FuterGueritaud11} that if $\calV$ has a local maximum $\alpha$ in  $\calA(\calT, \ss)$, then $\alpha$ is the set of angles of a positive spun triangulation of $M(\ss)$ (spun along the cores of the Dehn filling solid tori).
We deduce that for all other $\beta \in \calA(\calT, \ss)$ we have $\calV(\beta) < \Vol(M(\ss))$; compare \cite[Theorem 6.3]{FuterGueritaud11}.
This leads to the generalised Casson conjecture~\cite[Section 6.4]{FuterGueritaud11}.

\begin{conjecture}
Suppose that $M$ is a cusped finite-volume hyperbolic three-manifold. 
Suppose that $\ss$ is a collection of signed slopes.
Suppose that $\calT$ is any ideal triangulation of $M$.
Suppose that $\alpha$ is any point of $\calA(\calT, \ss)$. 
Then $\calV(\alpha) \leq \Vol(M(\ss))$. 
\end{conjecture}

One approach to the conjecture might start by finding a point of maximum volume in the compact polytope $\overline{\calA(\calT, \ss)}$. If the maximum occurs on the boundary, we might hope that this boundary point would guide us to a useful retriangulation of $\calT$.
One could then iterate the process, finding a sequence of triangulations and angle polytopes that ends at a positive triangulation.
But this is false hope.

For, consider the SnapPy manifold $\texttt{o9\_29517}$ with its given triangulation $\calT$.
Take $s = \mu + \lambda$. 
We compute to find the following: 
\begin{itemize}
\item
The open polytope $\calA(\calT, s)$ is non-empty.
\item
SnapPy gives shapes $\zz$ for $(\calT, s)$ so that the induced representation $\rho_\zz$ is discrete and faithful.
Of the nine tetrahedra of $(\calT, \zz)$, six are positive, two are negative, and one is flat. 
Thus the Casson--Rivin theorem~\cite[Theorem~6.1]{FuterGueritaud11} implies that $\calV$ has no maximum in $\calA(\calT, s)$. 
\item
The derivative of the log holonomy of the slope $r = 5\mu - 9\lambda$ vanishes at $\zz$. 
\end{itemize}
Hence by~\refcor{ChoiAlt} there is no way to retriangulate $\texttt{o9\_29517}(1,1)$ to obtain a positive spun triangulation. 
The same properties hold for $\texttt{o9\_21590}(-1,1)$ and $\texttt{o11\_465181}(1,1)$.

\renewcommand{\UrlFont}{\tiny\ttfamily}
\renewcommand\hrefdefaultfont{\tiny\ttfamily}
\bibliographystyle{plainurl}
\bibliography{bibfile}

\begin{thebibliography}{10}

\bibitem{Burton}
Benjamin~A. Burton.
\newblock The cusped hyperbolic census is complete.
\newblock {\em Trans. Amer. Math. Soc.}, to appear.
\newblock \href {https://doi.org/10.1090/tran/6767}
  {\path{doi:10.1090/tran/6767}}.

\bibitem{CHW}
Patrick~J. Callahan, Martin~V. Hildebrand, and Jeffrey~R. Weeks.
\newblock A census of cusped hyperbolic 3-manifolds.
\newblock {\em Math. Comp.}, 68(225):321--332, 1999.
\newblock \href {https://doi.org/10.1090/S0025-5718-99-01036-4}
  {\path{doi:10.1090/S0025-5718-99-01036-4}}.

\bibitem{Choi:PosOriented}
Young-Eun Choi.
\newblock Positively oriented ideal triangulations on hyperbolic
  three-manifolds.
\newblock {\em Topology}, 43(6):1345--1371, 2004.
\newblock \href {https://doi.org/10.1016/j.top.2004.02.002}
  {\path{doi:10.1016/j.top.2004.02.002}}.

\bibitem{SnapPy}
Marc Culler, Nathan~M. Dunfield, Matthias Goerner, and Jeffrey~R. Weeks.
\newblock Snap{P}y, a computer program for studying the geometry and topology
  of 3-manifolds.
\newblock Available at \url{http://snappy.computop.org} (January 2025).

\bibitem{DaddDuan}
Blake Dadd and Aochen Duan.
\newblock Constructing infinitely many geometric triangulations of the figure
  eight knot complement.
\newblock {\em Proc. Amer. Math. Soc.}, 144(10):4545--4555, 2016.
\newblock \href {https://doi.org/10.1090/proc/13076}
  {\path{doi:10.1090/proc/13076}}.

\bibitem{DelpHoffossManning}
Kelly Delp, Diane Hoffoss, and Jason~Fox Manning.
\newblock Problems in groups, geometry, and three-manifolds.
\newblock arXiv:1512.04620, 2015.
\newblock URL: \url{https://arxiv.org/abs/1512.04620}.

\bibitem{EpsteinPenner}
David B.~A. Epstein and Robert~C. Penner.
\newblock Euclidean decompositions of noncompact hyperbolic manifolds.
\newblock {\em J. Differential Geom.}, 27(1):67--80, 1988.
\newblock URL: \url{http://projecteuclid.org/euclid.jdg/1214441650}.

\bibitem{FuterGueritaud11}
David Futer and Fran\c{c}ois Gu\'eritaud.
\newblock From angled triangulations to hyperbolic structures.
\newblock In {\em Interactions between hyperbolic geometry, quantum topology
  and number theory}, volume 541 of {\em Contemp. Math.}, pages 159--182. Amer.
  Math. Soc., Providence, RI, 2011.

\bibitem{FHH}
David Futer, Emily Hamilton, and Neil~R. Hoffman.
\newblock Infinitely many virtual geometric triangulations.
\newblock {\em J. Topol.}, 15(4):2352--2388, 2022.
\newblock \href {https://doi.org/10.1112/topo.12271}
  {\path{doi:10.1112/topo.12271}}.

\bibitem{FPS:Ancillary}
David Futer, Jessica~S. Purcell, and Saul Schleimer.
\newblock Ancillary files stored with the {arXiv} version of this paper.
\newblock Available at \url{http://arxiv.org/}.

\bibitem{GHMTY}
David Gabai, Robert Haraway, Robert Meyerhoff, Nathaniel Thurston, and Andrew
  Yarmola.
\newblock Hyperbolic 3-manifolds of low cusp volume.
\newblock {\em Duke Math. J.}, to appear.
\newblock arXiv:2109.14570.

\bibitem{GMT}
David Gabai, G.~Robert Meyerhoff, and Nathaniel Thurston.
\newblock Homotopy hyperbolic 3-manifolds are hyperbolic.
\newblock {\em Ann. of Math. (2)}, 157(2):335--431, 2003.
\newblock \href {https://doi.org/10.4007/annals.2003.157.335}
  {\path{doi:10.4007/annals.2003.157.335}}.

\bibitem{GabaiTrnkova}
David Gabai and Maria Trnkova.
\newblock Exceptional hyperbolic 3-manifolds.
\newblock {\em Comment. Math. Helv.}, 90(3):703--730, 2015.
\newblock \href {https://doi.org/10.4171/CMH/368} {\path{doi:10.4171/CMH/368}}.

\bibitem{Goerner:Platonic}
Matthias Goerner.
\newblock Geodesic triangulations exist for cusped {P}latonic manifolds.
\newblock {\em New York J. Math.}, 23:1363--1367, 2017.
\newblock URL: \url{http://nyjm.albany.edu:8000/j/2017/23_1363.html}.

\bibitem{Macaulay2}
Daniel~R. Grayson and Michael~E. Stillman.
\newblock Macaulay2, a software system for research in algebraic geometry.
\newblock Available at \url{http://www2.macaulay2.com}.

\bibitem{GueritaudFuter}
Fran\c{c}ois Gu\'eritaud.
\newblock On canonical triangulations of once-punctured torus bundles and
  two-bridge link complements.
\newblock {\em Geom. Topol.}, 10:1239--1284, 2006.
\newblock With an appendix by David Futer.
\newblock \href {https://doi.org/10.2140/gt.2006.10.1239}
  {\path{doi:10.2140/gt.2006.10.1239}}.

\bibitem{HamPurcell}
Sophie~L. Ham and Jessica~S. Purcell.
\newblock Geometric triangulations and highly twisted links.
\newblock {\em Algebr. Geom. Topol.}, 23(3):1399--1462, 2023.
\newblock \href {https://doi.org/10.2140/agt.2023.23.1399}
  {\path{doi:10.2140/agt.2023.23.1399}}.

\bibitem{HildebrandWeeks}
Martin Hildebrand and Jeffrey Weeks.
\newblock A computer generated census of cusped hyperbolic 3-manifolds.
\newblock In {\em Computers and mathematics ({C}ambridge, {MA}, 1989)}, pages
  53--59. Springer, New York, 1989.

\bibitem{Hodgson:PersonalCommunication}
Craig Hodgson.
\newblock Personal communication, 2025.

\bibitem{HIKMOT}
Neil Hoffman, Kazuhiro Ichihara, Masahide Kashiwagi, Hidetoshi Masai, Shin'ichi
  Oishi, and Akitoshi Takayasu.
\newblock Verified computations for hyperbolic 3-manifolds.
\newblock {\em Exp. Math.}, 25(1):66--78, 2016.
\newblock \href {https://doi.org/10.1080/10586458.2015.1029599}
  {\path{doi:10.1080/10586458.2015.1029599}}.

\bibitem{JonesReid}
Kerry~N. Jones and Alan~W. Reid.
\newblock Vol3 and other exceptional hyperbolic 3-manifolds.
\newblock {\em Proc. Amer. Math. Soc.}, 129(7):2175--2185, 2001.
\newblock \href {https://doi.org/10.1090/S0002-9939-00-05775-0}
  {\path{doi:10.1090/S0002-9939-00-05775-0}}.

\bibitem{KalelkarSchleimerSegerman24}
Tejas Kalelkar, Saul Schleimer, and Henry Segerman.
\newblock Connecting essential triangulations {I}: via 2--3 and 0--2 moves.
\newblock arXiv:2405.03539, 2024.
\newblock URL: \url{https://arxiv.org/abs/2405.03539}.

\bibitem{Li}
Shana~Yunsheng Li.
\newblock The complete 10-tetrahedra census of orientable cusped hyperbolic
  3-manifolds.
\newblock arXiv:2512.02142, 2026.
\newblock URL: \url{https://arxiv.org/abs/2512.02142}.

\bibitem{Li:PersonalCommunication}
Shana~Yunsheng Li.
\newblock Personal communication, 2026.

\bibitem{LuoSchleimerTillmann}
Feng Luo, Saul Schleimer, and Stephan Tillmann.
\newblock Geodesic ideal triangulations exist virtually.
\newblock {\em Proc. Amer. Math. Soc.}, 136(7):2625--2630, 2008.
\newblock \href {https://doi.org/10.1090/S0002-9939-08-09387-8}
  {\path{doi:10.1090/S0002-9939-08-09387-8}}.

\bibitem{NeumannYang}
Walter~D. Neumann and Jun Yang.
\newblock Bloch invariants of hyperbolic 3-manifolds.
\newblock {\em Duke Math. J.}, 96(1):29--59, 1999.
\newblock \href {https://doi.org/10.1215/S0012-7094-99-09602-3}
  {\path{doi:10.1215/S0012-7094-99-09602-3}}.

\bibitem{NeumannZagier85}
Walter~D. Neumann and Don Zagier.
\newblock Volumes of hyperbolic three-manifolds.
\newblock {\em Topology}, 24(3):307--332, 1985.
\newblock \href {https://doi.org/10.1016/0040-9383(85)90004-7}
  {\path{doi:10.1016/0040-9383(85)90004-7}}.

\bibitem{Petronio:IdealTriangulations}
Carlo Petronio.
\newblock Ideal triangulations of hyperbolic 3-manifolds.
\newblock {\em Boll. Unione Mat. Ital. Sez. B Artic. Ric. Mat. (8)},
  3(3):657--672, 2000.

\bibitem{PetronioPorti}
Carlo Petronio and Joan Porti.
\newblock Negatively oriented ideal triangulations and a proof of {T}hurston's
  hyperbolic {D}ehn filling theorem.
\newblock {\em Expo. Math.}, 18(1):1--35, 2000.

\bibitem{Purcell20}
Jessica~S. Purcell.
\newblock {\em Hyperbolic knot theory}, volume 209 of {\em Graduate Studies in
  Mathematics}.
\newblock American Mathematical Society, Providence, RI, [2020] \copyright
  2020.
\newblock \href {https://doi.org/10.1090/gsm/209} {\path{doi:10.1090/gsm/209}}.

\bibitem{Sirotkina}
M.~L. Sirotkina.
\newblock On the triangulation of three-dimensional hyperbolic manifolds.
\newblock {\em Algebra i Analiz}, 14(6):192--204, 2002.

\bibitem{Thurston79}
William~P. Thurston.
\newblock {\em The Geometry and Topology of Three-Manifolds}.
\newblock Princeton Univ. Math. Dept. Notes, 1979.
\newblock Available at http://www.msri.org/communications/books/gt3m.

\bibitem{Tillmann12}
Stephan Tillmann.
\newblock Degenerations of ideal hyperbolic triangulations.
\newblock {\em Math. Z.}, 272(3-4):793--823, 2012.
\newblock \href {https://doi.org/10.1007/s00209-011-0958-8}
  {\path{doi:10.1007/s00209-011-0958-8}}.

\bibitem{Trnkova:PersonalCommunication}
Maria Trnkova.
\newblock Personal communication, 2026.

\bibitem{wada1996inequality}
Masaaki Wada, Yasushi Yamashita, and Han Yoshida.
\newblock An inequality for polyhedra and ideal triangulations of cusped
  hyperbolic 3-manifolds.
\newblock {\em Proc. Amer. Math. Soc.}, 124(12):3905--3911, 1996.
\newblock URL: \url{https://doi-org/10.1090/S0002-9939-96-03563-0}, \href
  {https://doi.org/10.1090/S0002-9939-96-03563-0}
  {\path{doi:10.1090/S0002-9939-96-03563-0}}.

\bibitem{Weeks05}
Jeff Weeks.
\newblock Computation of hyperbolic structures in knot theory.
\newblock In {\em Handbook of knot theory}, pages 461--480. Elsevier B. V.,
  Amsterdam, 2005.
\newblock \href {https://doi.org/10.1016/B978-044451452-3/50011-3}
  {\path{doi:10.1016/B978-044451452-3/50011-3}}.

\bibitem{Yoshida:TriangConjecture}
Han Yoshida.
\newblock Ideal tetrahedral decompositions of hyperbolic 3-manifolds.
\newblock {\em Osaka J. Math.}, 33(1):37--46, 1996.
\newblock URL: \url{http://projecteuclid.org/euclid.ojm/1200786689}.

\end{thebibliography}

\end{document}